\newcommand{\cupdot}{\mathbin{\mathaccent\cdot\cup}}
 \theoremstyle{plain}
      \newtheorem{theorem}{Theorem}[section]
      \newtheorem{lemma}{Lemma}[section]
      \newtheorem{problem}{Problem}
      \newtheorem{remark}{Remark}[section]
      \theoremstyle{definition}
      \newtheorem{definition}{Definition}[section]
\begin{document}

\title[]{On a functional equation related to a pair of hedgehogs with congruent projections}

\author{Sergii Myroshnychenko}
\address{Department of Mathematical Sciences, Kent State University,
Kent, OH 44242, USA} \email{smyroshn@kent.edu}

\begin{abstract}
Hedgehogs are geometrical objects that describe the Minkowski differences of arbitrary convex bodies in the Euclidean space $\mathbb{E}^n$. We prove that two hedgehogs in $\mathbb{E}^n, n \geq 3$, coincide up to a translation and a reflection in the origin, provided that their projections onto any two-dimensional plane are directly congruent and have no direct rigid motion symmetries. Our result is a consequence of a more general analytic statement about the solutions of a functional equation in which the support functions of hedgehogs are replaced with two arbitrary twice continuously differentiable functions on the unit sphere.
\end{abstract}

     \maketitle

\section{Introduction}
In this paper we address several questions related to the following open problem (cf. \cite{Ga}, Problem 3.2, page 125):

\begin{problem}\label{problem}
Suppose that $2 \leq k \leq n-1$ and that $K$ and $L$ are convex bodies in $\mathbb{E}^n$ such that the projection $K|H$ is directly congruent to $L|H$ for all subspaces $H$ in $\mathbb{E}^n$ of dimension $k$. Is $K$ a translate of $\pm L$?
\end{problem}
\sloppy
Here, we say that two sets $A$ and $ B$ in the Euclidean space $\mathbb{E}^k$ are \emph{directly congruent} if there exists a rotation $\phi \in SO(k)$, such that $\phi(A)$ is a translate of $B$.

We refer the reader to \cite{Go}, \cite{Ga} (pp. $100-110$), \cite{Ha} (pp. $126-127$), \cite{R1}, \cite{R3}, \cite{ACR} for history and partial results related to this problem. In particular, V. P. Golubyatnikov considered Problem \ref{problem} in the case $k=2$ and obtained the following result.

\begin{theorem}[\cite{Go}, Theorem 2.1.1, page 13] \label{th2}
Consider two convex bodies $K$ and $L$ in $\mathbb{E}^n, n \geq 3$. Assume that their projections on any two-dimensional plane passing through the origin are directly congruent and have no direct rigid motion symmetries, then $K = L + b$ or $K = -L +b$ for some $b \in \mathbb{E}^n$.
\end{theorem}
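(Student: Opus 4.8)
The plan is to translate the congruence hypothesis into a family of functional equations for the support functions, one for each great circle of $S^{n-1}$, and then to recover the global conclusion by separating the even and odd parts of these functions. Write $f=h_K$ and $g=h_L$ for the support functions on $S^{n-1}$, chosen so that the origin is interior to both bodies. For a two-dimensional subspace $H$ the support function of the projection $K|H$ is the restriction of $f$ to the great circle $S^{n-1}\cap H$, and direct congruence of $K|H$ and $L|H$ means there are $\rho_H\in SO(H)$ and $b_H\in H$ with
\[
f(u)=g(\rho_H^{-1}u)+\langle b_H,u\rangle,\qquad u\in S^{n-1}\cap H.
\]
First I would split $f=f^{+}+f^{-}$ and $g=g^{+}+g^{-}$ into even and odd parts under $u\mapsto -u$. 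The translation term $\langle b_H,\cdot\rangle$ is odd, so on each circle the even parts obey the clean, translation-free relation $f^{+}|_{S^{n-1}\cap H}=g^{+}|_{S^{n-1}\cap H}\circ\rho_H^{-1}$. Note that $f^{+}$ is the support function of the central symmetral $\tfrac12\bigl(K\oplus(-K)\bigr)$, which is the same body for $K$ and for $-K$; this is precisely the source of the $\pm$ in the conclusion, and indeed both admissible outcomes $K=L+b$ and $K=-L+b$ force $f^{+}=g^{+}$.

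For the even part I would argue that $f^{+}=g^{+}$. Since $\rho_H$ is a rotation of the circle and hence preserves arc length, the relation above gives $\int_{S^{n-1}\cap H}(f^{+})^{2}=\int_{S^{n-1}\cap H}(g^{+})^{2}$ for every $H$; that is, the great-circle Radon transform of the even function $(f^{+})^{2}-(g^{+})^{2}$ vanishes identically. By injectivity of this transform on even functions we get $(f^{+})^{2}=(g^{+})^{2}$, and positivity of the two support functions yields $f^{+}=g^{+}$. Geometrically, $K$ and $L$ then have the same central symmetral, so on each plane $K|H$ and $L|H$ share a difference body while being directly congruent; consequently $K|H$ is a translate either of $L|H$ or of $-(L|H)$.

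It remains to fix the sign globally and to identify the translation. The no-symmetry hypothesis prevents $L|H$ from being directly congruent to $-(L|H)$, so the two alternatives cannot coincide, and a continuity argument over the connected Grassmannian of two-planes forces a single choice for all $H$ simultaneously. In the first case $f^{-}-g^{-}$ restricts to a pure first Fourier mode on every great circle, in the second case $f^{-}+g^{-}$ does; either way the relevant difference $\phi$ is an odd $C^{2}$ function whose restriction to every geodesic $\gamma$ satisfies $\tfrac{d^{2}}{ds^{2}}\phi(\gamma(s))+\phi(\gamma(s))=0$, i.e. $\nabla^{2}\phi=-\phi\,g$ on the sphere. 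The solutions of this equation are exactly the linear functions $\langle b,\cdot\rangle$, and substituting back gives $f=g+\langle b,\cdot\rangle$ or $f(u)=g(-u)+\langle b,u\rangle$, that is, $K=L+b$ or $K=-L+b$.

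I expect the global rigidity in the last paragraph to be the main obstacle: promoting the per-plane alternatives to one uniform choice, and ruling out the degenerate planes on which the common difference body might acquire extra rotational symmetry, is where the regularity of $f$ and $g$, the hypothesis $n\ge 3$ (ensuring the Grassmannian is connected and that each direction lies on an abundance of great circles), and the absence of planar symmetries all have to be combined. This is exactly the analytic content that the paper's general theorem on the underlying functional equation is designed to supply.
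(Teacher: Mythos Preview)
Your reduction to support functions and the even/odd splitting both match the paper (its Lemma~3.1 obtains $f_e=g_e$ via the Funk transform directly on $f_e-g_e$; your detour through $(f^{+})^{2}-(g^{+})^{2}$ also works). The genuine gap is the sentence ``consequently $K|H$ is a translate either of $L|H$ or of $-(L|H)$.'' This implication is false on a single plane: take $A=L|H$ to be a planar body of constant width with no direct rigid motion symmetries, and set $B=\rho A$ for a rotation $\rho\notin\{\mathrm{id},R_\pi\}$. Then $A$ and $B$ are directly congruent and share the same central symmetral (a disk), yet $B$ is not a translate of $\pm A$. So ``same difference body plus direct congruence'' does \emph{not} force the per-plane rotation to be $\pm\mathrm{id}$; some genuinely global argument is required to pin down the angle, and your sketch provides none. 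You have also misplaced the difficulty: the ``global rigidity'' you flag---promoting a per-plane $\pm$ alternative to a uniform sign---is comparatively easy (this is the content of the paper's Theorem~1.4/Lemma~3.2 and the $\Xi_0,\Xi_\pi$ argument). The hard step is precisely showing that the angle is $0$ or $\pi$ on every plane in the first place.

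The paper's route to that hard step is quite different from anything in your outline. After reducing to odd functions it shows that the assignment $\xi\mapsto\varphi_\xi$ (the rotation angle on $\xi^\perp$) is well-defined and continuous thanks to the no-symmetry hypothesis (Lemma~3.3). A topological argument (Lemma~3.4), mapping each meridian with identified poles into a figure-eight and reading off the induced map on $H_1$, then forces $\varphi^{-1}(0)$ or $\varphi^{-1}(\pi)$ to meet every meridian. From there a case split handles the two possibilities: if $\varphi^{-1}(0)$ is not a great circle, one finds three non-coplanar directions in it and bootstraps (Lemmas~3.6--3.8); if it \emph{is} a great circle, one passes to convex bodies of constant width (this is exactly the constant-width obstruction that breaks your local claim) and finishes with a Hadwiger-type argument. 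Your final identification of linear functions via $\nabla^{2}\phi=-\phi\,g$ would indeed clean up the endgame once one knows each plane carries only a translation/reflection, but you need the machinery above (or something equivalent) to get there.
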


 Here a set $A \subset \mathbb{E}^2$ has a direct rigid motion symmetry if it is directly congruent to itself.

In this paper we study a functional equation related to Problem \ref{problem} in the case $k=2$. To formulate our main result we define an analogue of the notion of a direct rigid motion symmetry for functions on the unit circle $S^1$ in $\mathbb{E}^2$. We say that a function $h$ on $S^1$ satisfies a \emph{direct rigid motion symmetry equation} if there exists a non-trivial rotation $\phi\in SO(2)$ and $a \in \mathbb{E}^2$, such that
\begin{equation}\label{eq1}
h(\phi( u) )+a\cdot u = h(u) \quad \textrm{for any} \quad u \in S^1.
\end{equation}

Our main result is

\begin{theorem}\label{th}
Let $f$ and $ g$ be two twice continuously differentiable real-valued functions on $S^{n-1} \subset \mathbb{E}^n$, $n \geq 3$. Assume that for any $2$-dimensional plane $\alpha$ passing through the origin there exists a vector $a_{\alpha} \in \alpha$ and a rotation $\phi_{\alpha} \in SO(2,\alpha)$, such that the restrictions of $f$ and $g$ onto the large circle $S^{n-1} \cap \alpha$ satisfy the equation
\begin{equation} \label{eq1}
f(\phi_{\alpha}( u) )+a_{\alpha}\cdot u = g(u) \quad \forall u \in S^{n-1} \cap \alpha.
\end{equation}
Then there exists $b \in \mathbb{E}^n$ such that for all $u \in S^{n-1}$ we have $g(u) = f(u) + b \cdot u$ or $g(u) = f(-u) + b \cdot u$, provided that the restrictions of $f, g$ onto any such large circle $S^{n-1} \cap \alpha$ do not satisfy the direct rigid motion symmetry equation.
\end{theorem}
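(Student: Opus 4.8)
The plan is to eliminate the unknown translation vector $a_\alpha$ by a second-order operator, to reduce the hypothesis to the statement that a rotation-covariant Hessian form of $f$ matches that of $g$ on every great circle up to the rotation $\phi_\alpha$, and then to prove a rigidity statement forcing $\phi_\alpha$ to be globally $\pm\mathrm{Id}$. First I would extend $f$ and $g$ to positively $1$-homogeneous functions $\bar f,\bar g$ on $\mathbb{R}^n\setminus\{0\}$. On a great circle $S^{n-1}\cap\alpha$ parametrized by $u(\theta)=\cos\theta\, e_1+\sin\theta\, e_2$ (with $e_1,e_2$ an orthonormal basis of $\alpha$), the operator $1+\tfrac{d^2}{d\theta^2}$ annihilates the restriction of any linear function $a_\alpha\cdot u$, and a short computation using $u''=-u$ and Euler's relation $\nabla\bar f(u)\cdot u=f(u)$ gives
\begin{equation*}
f(u(\theta))+\tfrac{d^2}{d\theta^2}f(u(\theta))=\langle \nabla^2\bar f(u)\,t,\,t\rangle,\qquad t=u'(\theta).
\end{equation*}
Introducing the symmetric form $S_f(u):=\nabla^2\bar f(u)\big|_{u^\perp}$ on $T_uS^{n-1}$ (and $S_g$ likewise), applying $1+\tfrac{d^2}{d\theta^2}$ to \eqref{eq1} turns the hypothesis into a translation-free relation: on each circle, for the unit tangent $t$,
\begin{equation*}
\langle S_g(u)\,t,\,t\rangle=\langle S_f(\phi_\alpha^{-1}u)\,\phi_\alpha^{-1}t,\,\phi_\alpha^{-1}t\rangle .
\end{equation*}

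Next I would record that the rotation is canonical. If two pairs $(\phi_\alpha,a_\alpha)$ and $(\phi'_\alpha,a'_\alpha)$ solved \eqref{eq1} on $S^{n-1}\cap\alpha$, then setting $\rho=\phi_\alpha(\phi'_\alpha)^{-1}$ one obtains $f(\rho v)+c\cdot v=f(v)$ for a suitable $c\in\alpha$, which is a direct rigid motion symmetry equation for $f$; the non-symmetry hypothesis forces $\rho=\mathrm{Id}$, so $\phi_\alpha$ is \emph{uniquely} determined by $\alpha$. A standard compactness argument then shows $\alpha\mapsto\phi_\alpha$ is continuous: subsequential limits of $\phi_{\alpha_n}$ satisfy the limiting equation on $S^{n-1}\cap\alpha$ and hence coincide with $\phi_\alpha$ by uniqueness. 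Since the manifold of oriented $2$-planes in $\mathbb{E}^n$ is connected for $n\geq3$, this continuity will be what lets a pointwise dichotomy propagate globally.

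The heart of the argument, and the step I expect to be the \textbf{main obstacle}, is to show that the rotation angle $\psi_\alpha$ of $\phi_\alpha$ lies in the discrete set $\{0,\pi\}$ for every $\alpha$. The mechanism I would pursue is a compatibility (monodromy) argument on overlapping great circles: it suffices to work inside a fixed $3$-dimensional subspace $V$, where for the pencil of planes through a fixed direction $u\in V$ the tangent $t$ ranges over a full circle and the matching reads $\langle S_g(u)\,t,\,t\rangle=\langle S_f(\cos\psi\,u-\sin\psi\,t)\,t',\,t'\rangle$ with $t'=\sin\psi\,u+\cos\psi\,t$ and $\psi=\psi_{\mathrm{span}(u,t)}$. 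Combining this with the symmetric relation obtained by solving \eqref{eq1} for $f$ in terms of $g$, and composing the resulting transports of second-order data along two overlapping great circles, one produces a nontrivial element $\rho$ together with a vector $c$ for which $f(\rho\,\cdot)+c\cdot(\cdot)$ agrees with $f$ on some great circle — a forbidden direct rigid motion symmetry — unless the angle is $0$ or $\pi$. Making this loop-compatibility precise, and checking that the manufactured symmetry is genuinely nontrivial exactly when $\psi_\alpha\notin\{0,\pi\}$, is the delicate part of the proof and is where the dimensional hypothesis $n\geq3$ (the abundance of intersecting great circles) is essential.

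Finally I would assemble the conclusion. Continuity of $\alpha\mapsto\phi_\alpha$, the discreteness $\psi_\alpha\in\{0,\pi\}$, and connectedness of the space of oriented $2$-planes force $\phi_\alpha\equiv\mathrm{Id}$ or $\phi_\alpha\equiv-\mathrm{Id}$. If $\phi_\alpha\equiv\mathrm{Id}$, the reduced relation gives $\langle S_f(u)t,t\rangle=\langle S_g(u)t,t\rangle$ for all $u$ and all tangent $t$, hence $S_f=S_g$; since $\nabla^2\bar f(u)$ is determined by $S_f(u)$ together with $\nabla^2\bar f(u)\,u=0$, this yields $\nabla^2(\bar f-\bar g)\equiv0$ off the origin, so $\bar f-\bar g$ is linear and $g(u)=f(u)+b\cdot u$ for a single global $b$. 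If $\phi_\alpha\equiv-\mathrm{Id}$, then $\phi_\alpha u=-u$ on every circle, so $S_g(u)=S_{\check f}(u)$ with $\check f(u)=f(-u)$, and the same Hessian argument gives $g(u)=f(-u)+b\cdot u$. In either case the vector $b$ is global automatically, because $g-f$ (respectively $g-\check f$) has vanishing Hessian form everywhere, completing the proof.
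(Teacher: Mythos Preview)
Your framework --- killing the translation $a_\alpha$ with the operator $1+d^2/d\theta^2$ to pass to a Hessian-form relation, and proving uniqueness/continuity of $\phi_\alpha$ from the non-symmetry hypothesis --- is sound and is close in spirit to what the paper does (the paper handles the even parts via the Funk transform rather than the Hessian, but the effect is the same: the problem reduces to odd data with a well-defined, continuous angle map). The difficulty is exactly where you locate it, and your proposed resolution does not go through as sketched.

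The gap is the ``monodromy'' step claiming that composing transports along two overlapping great circles manufactures a relation $f(\rho\,\cdot)+c\cdot(\cdot)=f(\cdot)$ on some great circle with $\rho$ a nontrivial rotation of that circle. The rotations $\phi_{\alpha_1}$ and $\phi_{\alpha_2}$ act in \emph{different} $2$-planes; they do not compose to a rotation of any single plane, and your Hessian identity compares $S_g(u)$ with $S_f$ evaluated at the \emph{rotated} point $\phi_\alpha u$, which lies in $\alpha$ but depends on $\alpha$. Varying $\alpha$ through $u$ therefore relates $S_g(u)$ to values of $S_f$ at a whole circle of different base points, not to $S_f$ at a single point, so no closed loop in the Grassmannian yields an equation of the forbidden form on a fixed great circle. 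Without an extra mechanism there is no contradiction when $\psi_\alpha\notin\{0,\pi\}$.

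The paper's argument for this step is substantially different and is the real content of the proof. Working first in $n=3$, one does \emph{not} show $\psi_\xi\in\{0,\pi\}$ directly; instead one assumes $0<\varphi(u_0)<\pi$ for some $u_0$ and uses a homology computation (mapping each meridian through $\pm u_0$ into a figure-eight obtained from $[-\pi,\pi]$ by identifying $\pm\varphi(u_0)$ and $\pm\pi$) to prove that one of the level sets $\varphi^{-1}(0)$ or $\varphi^{-1}(\pi)$ meets every meridian. One then splits into two cases. If that level set is not a great circle, it contains three non-coplanar directions, and a short linear-algebra lemma (analogous to your final Hessian step) forces the translation vectors to vanish and yields $\varphi(u_0)=0$, a contradiction. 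If the level set \emph{is} a great circle, the paper invokes the $C^2$ hypothesis to add a large constant and realize $f,g$ as support functions of convex bodies of constant width, then finishes with a Hadwiger-type argument using a circle of projection directions. Your continuity/connectedness endgame is fine once $\psi_\alpha\in\{0,\pi\}$ is known, but getting there requires one of these mechanisms (or a genuine replacement), not the loop-composition heuristic.
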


If $f$ and $g$ are the support functions of convex bodies $K$ and $L$ in $\mathbb{E}^n, n \geq 3$, respectively, we reproduce the aforementioned result of \mbox{V. P. Golubyatnikov}, \cite{Go}. Our approach is based on his ideas together with an application of the connection between twice continuously differentiable functions on the unit sphere and support functions of convex bodies. It allows, in particular, to get rid of the convexity assumption on functions.

In the case when the orthogonal transformations $\phi_{\xi}$ degenerate into identity or reflection with respect to the origin, we show that the assumptions on the lack of symmetries and smoothness are not necessary. We have

\begin{theorem}\label{th3}
Let $2 \leq k \leq n-1$ and let $f, g$ be two continuous real-valued functions on $S^{n-1} \subset \mathbb{E}^n$. Assume that  for any $k$-dimensional plane $\alpha$ passing through the origin and some vector $a_{\alpha}\in \alpha$, the restrictions of $f$ and $g$ onto $S^{n-1}\cap \alpha$ satisfy at least one of the equations
$$
f(-u)+a_{\alpha} \cdot u = g(u) \quad \textrm{for all} \quad u \in \alpha \cap S^{n-1},\textrm{or}
 $$
 $$
f(u) + a_{\alpha} \cdot u = g(u) \quad \textrm{for all} \quad u \in \alpha \cap S^{n-1}.
$$
Then there exists $b \in \mathbb{E}^n$ such that for all $u \in S^{n-1}$ we have $g(u) = f(u) + b \cdot u$ or $g(u) = f(-u) + b \cdot u$.
\end{theorem}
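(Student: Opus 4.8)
The plan is to reduce everything to a statement about odd functions, and then to run a connectedness argument on the Grassmannian $G(n,k)$ of $k$-dimensional subspaces. \emph{(Reduction.)} Fix a $k$-plane $\alpha$. Whichever of the two options holds on $\alpha\cap S^{n-1}$, replacing $u$ by $-u$ and adding the resulting identities cancels the linear term $a_\alpha\cdot u$ and yields $f(u)+f(-u)=g(u)+g(-u)$ for all $u\in\alpha\cap S^{n-1}$. Since every point of $S^{n-1}$ lies on some such great subsphere, the even parts of $f$ and $g$ coincide on all of $S^{n-1}$. Subtracting the common even part from each option and writing $p,q$ for the odd parts of $f,g$, the hypothesis becomes: for every $k$-plane $\alpha$, either $F:=q-p$ or $G:=q+p$ restricts to a linear function on $\alpha\cap S^{n-1}$. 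The desired conclusion is exactly that $F$ is the restriction of a global linear form (which gives $g=f+b\cdot u$) or that $G$ is (which gives $g(u)=f(-u)+b\cdot u$).

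\emph{(Topology and rigidity.)} Let $A=\{\alpha: F|_\alpha\ \text{linear}\}$ and $B=\{\alpha: G|_\alpha\ \text{linear}\}$. Since linearity of a restriction is the vanishing of its $L^2$-residual against the finite-dimensional space of linear forms, and this residual depends continuously on $\alpha$, both $A$ and $B$ are closed; by hypothesis they cover the connected space $G(n,k)$. Because $A\cup B=G(n,k)$, one has $A\setminus B=B^{c}$ and $B\setminus A=A^{c}$, so these two sets are open. The analytic engine is a rigidity lemma: if $F|_\alpha$ is linear for every $\alpha$ in a nonempty open family $U\subseteq G(n,k)$, then $F$ equals a single linear form on the open cone $\bigcup_{\alpha\in U}\alpha$. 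This is seen by passing to the degree-one homogeneous extension of $F$ and observing that it is additive on every $2$-plane contained in a member of $U$; continuity and homogeneity then upgrade additivity to genuine linearity on the connected swept cone, with no differentiability needed. I will also use the evident \emph{propagation}: if $F|_\alpha$ is linear and $F=c\cdot u$ on an arc of $\alpha\cap S^{n-1}$, then $F=c\cdot u$ on all of $\alpha\cap S^{n-1}$, since two linear forms agreeing on an arc agree everywhere.

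\emph{(Globalization.)} It remains to show $A=G(n,k)$ or $B=G(n,k)$, equivalently that $F$ or $G$ is globally linear. Suppose neither holds, so $A^{c}$ and $B^{c}$ are nonempty and open. If the open cone over $B^{c}$ were all of $S^{n-1}$, then $F$ would be locally a linear form everywhere, and by connectedness of $S^{n-1}$ together with the propagation remark it would be a single global one, a contradiction; hence there is a direction $u_0$ lying on no plane of $B^{c}$, which means that \emph{every} $k$-plane through $u_0$ lies in $B$ and carries a linear restriction of $G$. Seeding with the open set on which $G$ is already a fixed linear form $d\cdot u$ (provided by the rigidity lemma applied to a component of $A^{c}$) and propagating $G$ across the planes through $u_0$—each of which meets the seed in an arc—one spreads the identity $G=d\cdot u$ over all of $S^{n-1}$, contradicting that $G$ is not global. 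Transporting the resulting global linear identity back through the reduction gives $g=f+b\cdot u$ or $g(u)=f(-u)+b\cdot u$.

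I expect this last propagation to be the main obstacle. The cone swept by a small open family of planes is only a band about a great subsphere, so linearity never spreads in a single step; the delicate point is to organize the propagation—exploiting that a direction through which one function fails to be linear on some plane forces the \emph{other} function to be linear on the entire pencil of planes through that direction—so that the identity reaches every direction of $S^{n-1}$ without ever crossing an interface on which the roles of $F$ and $G$ could interchange.
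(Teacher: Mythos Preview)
Your reduction to odd parts and the framing via a closed cover $A\cup B$ of the Grassmannian match the paper's opening moves (the paper works with $\Xi_0,\Xi_\pi\subset S^{n-1}$ in the hyperplane case). But the proposal has two genuine gaps, one of which you already flag.

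\emph{The rigidity lemma for general $k$.} You work directly on $G(n,k)$, whereas the paper reduces by induction to $k=n-1$. This is not cosmetic: for $k=n-1$ any two hyperplanes meet in an $(n-2)$-plane, and that overlap is exactly what lets one glue the linear forms (the paper does this via its Lemmas~\ref{shiza} and~\ref{suss2}). For $2k\le n$, two nearby $k$-planes generically meet only at the origin, and your justification---``additive on every $2$-plane contained in a member of $U$''---gives nothing for pairs $v_1,v_2$ in the swept cone that do not lie in a common $\alpha\in U$. The swept cone is open, but $F$ being linear on each leaf of a foliation by $k$-planes does not by itself force a single linear form on the union when the leaves are pairwise disjoint away from $0$. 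The lemma may well be true, but it needs an actual argument, not the sentence you give.

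\emph{The globalization step.} You correctly identify this as the obstacle, and as written it does not go through. The ``seed'' is merely the swept cone of one component of $A^c$, a possibly small open patch on $S^{n-1}$; the assertion that every $k$-plane through $u_0$ ``meets the seed in an arc'' is unjustified and in general false---a great $(k-1)$-sphere through $u_0$ can miss a small open set entirely. Without that intersection you cannot propagate $G=d\cdot u$ off the seed, and the argument stalls.

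The paper avoids both issues. After reducing to $k=n-1$, it uses Lemmas~\ref{shiza} and~\ref{suss2} to show that the translation vectors are \emph{globally} consistent on each of $\Xi_0$ and $\Xi_\pi$ separately (not just on an open piece). Absorbing these fixed translations into $f$ and $g$ kills all the $a_\xi$'s and reduces the question to: if $\Lambda_0\cup\Lambda_\pi=S^{n-1}$ with $\Lambda_0=\{\xi:f=g\ \text{on}\ \xi^\perp\}$ and $\Lambda_\pi=\{\xi:f(-\cdot)=g\ \text{on}\ \xi^\perp\}$, must one of them be all of $S^{n-1}$? This is precisely Lemma~\ref{rubik}, quoted from Ryabogin, \emph{On the continual Rubik's cube}, Adv.\ Math.\ \textbf{231} (2012). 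That result is the dichotomy you are trying to establish from scratch in your final paragraph; it is nontrivial and is not a consequence of connectedness alone.
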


As one of the applications of Theorem \ref{th} we also obtain a result about the classical hedgehogs, which are geometrical objects that describe the Minkowski differences of arbitrary convex bodies in $\mathbb{E}^n$.

The idea of using Minkowski differences of convex bodies may be traced back to some papers by \mbox{A. D. Alexandrov} and H. Geppert in the 1930's (see \cite{A}, \cite{Ge}). Many notions from the theory of convex bodies carry over to hedgehogs and quite a number of classical results find their counterparts (see, for instance, \cite{MM4}). Classical hedgehogs are (possibly singular, self-intersecting and non-convex) hypersurfaces that describe differences of convex bodies with twice continuously differentiable support functions in $\mathbb{E}^n$. We refer the reader to works of \mbox{Y. Martinez-Maure}, \cite{MM1}, \cite{MM2}, \cite{MM3}, for more information on this topic.

We have

\begin{theorem}\label{hedg}
Consider two classical hedgehogs $H_f$ and $H_g$ in $\mathbb{E}^n, n \geq 3$. Assume that their projections on any two-dimensional plane passing through the origin are directly congruent and have no direct rigid motion symmetries, then $H_g = H_f + b$ or $H_g = -H_f +b$ for some $b \in \mathbb{E}^n$.
\end{theorem}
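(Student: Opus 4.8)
The plan is to deduce Theorem \ref{hedg} from the analytic Theorem \ref{th} by passing to support functions. Recall that a classical hedgehog $H_f$ in $\mathbb{E}^n$ is by definition determined by a support function $f \in C^2(S^{n-1})$, and conversely every such function is the support function of a unique hedgehog; in particular the $C^2$-smoothness hypothesis of Theorem \ref{th} is automatically met. The entire argument therefore reduces to translating the geometric hypotheses and conclusion of Theorem \ref{hedg} into the analytic statements about $f$ and $g$ appearing in Theorem \ref{th}.

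First I would set up the dictionary between operations on hedgehogs and operations on their support functions. Projection onto a two-dimensional plane $\alpha$ through the origin acts linearly on support functions, so the projection $H_f|\alpha$ is the planar hedgehog whose support function is the restriction $f|_{S^{n-1}\cap\alpha}$, and likewise for $H_g$. A rotation $\phi\in SO(2,\alpha)$ sends a hedgehog with support function $h$ to the one with support function $h\circ\phi^{-1}$, while a translation by $a\in\alpha$ adds the linear term $a\cdot u$. Consequently the direct congruence $\phi(H_f|\alpha)=H_g|\alpha+a$ of the two planar hedgehogs is equivalent, after passing to the inverse rotation $\phi_\alpha:=\phi^{-1}$ and adjusting the sign of $a$, to the functional equation \eqref{eq1} for the restrictions of $f$ and $g$ to the great circle $S^{n-1}\cap\alpha$.

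Next I would verify the symmetry hypothesis. A planar hedgehog has a direct rigid motion symmetry precisely when it is directly congruent to itself by a nontrivial rotation, which under the same dictionary means exactly that the restriction of its support function satisfies the direct rigid motion symmetry equation \eqref{eq1} with some $\phi\neq\mathrm{id}$. Hence the assumption that the projections $H_f|\alpha$ and $H_g|\alpha$ have no direct rigid motion symmetries is literally the hypothesis of Theorem \ref{th} that the restrictions of $f$ and $g$ to each great circle do not satisfy the symmetry equation. Applying Theorem \ref{th} then produces a vector $b\in\mathbb{E}^n$ with $g(u)=f(u)+b\cdot u$ or $g(u)=f(-u)+b\cdot u$ for all $u\in S^{n-1}$. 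Reading these identities back through the dictionary — $f(u)+b\cdot u$ is the support function of $H_f+b$, and $f(-u)+b\cdot u$ that of $-H_f+b$, since the point reflection $-I$ sends $h$ to $u\mapsto h(-u)$ — yields $H_g=H_f+b$ or $H_g=-H_f+b$, as claimed.

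The analytic input is entirely contained in Theorem \ref{th}, so the only place needing genuine care is the dictionary itself. For convex bodies the facts that projection corresponds to restriction of the support function and that direct congruence corresponds to \eqref{eq1} are classical, but hedgehogs may be singular, self-intersecting and non-convex, so I expect the main (if minor) obstacle to be justifying these correspondences in the hedgehog setting. This is handled by observing that projection, rotation, translation, and reflection in the origin all act \emph{linearly or affinely} at the level of support functions, and that a hedgehog is uniquely recovered from its $C^2$ support function; each correspondence thus reduces to an elementary identity for support functions rather than to any geometric property of the possibly degenerate hypersurface.
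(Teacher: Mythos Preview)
Your proposal is correct and follows essentially the same route as the paper: both deduce Theorem~\ref{hedg} from Theorem~\ref{th} by establishing the support-function dictionary (projection $\leftrightarrow$ restriction, rotation $\leftrightarrow$ precomposition, translation $\leftrightarrow$ linear term, reflection $\leftrightarrow$ $u\mapsto h(-u)$), verifying that the geometric hypotheses translate into the analytic ones, and reading the conclusion back. If anything, your remark that the dictionary for hedgehogs rests on the linearity of these operations at the level of support functions is slightly more careful than the paper's use of $\max$-formulas borrowed from the convex case.
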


It remains unclear if Theorem \ref{th} holds without the assumption that the restrictions of $f$ and $g$ to any equator do not satisfy the direct rigid motion equation.

\section{Notation, Auxiliary Definitions and General Remarks}
We denote by $S^{n-1}$ the set of all unit vectors in the Euclidean space $\mathbb{E}^n$. $SO(n)$ is defined to be the set of all linear orthogonal transformations of $\mathbb{E}^n$ that can be represented as matrices with determinant equal to 1. For any unit vector $\xi \in S^{n-1}$ we denote $\xi^{\perp}$ to be the orthogonal complement of $\xi$ in $\mathbb{E}^{n}$, i.e. the set of all $x \in \mathbb{E}^{n}$ such that $x \cdot \xi = 0$. For any function $h$, $h_e$ and $h_o$ stand for its even and odd parts respectively,
\begin{equation}\label{EvenOdd}
 h_e(u) = \frac{h(u)+h(-u)}{2} \quad \textrm{and} \quad h_0(u) = \frac{h(u)-h(-u)}{2}.
\end{equation}

Observe that functions in equation (\ref{eq1}) can be considered up to translations. Namely, if instead of the function $f(v)$ on $S^{n-1}$ we consider $f_1(v) = f(v) + y \cdot v$ for any $y \in \mathbb{E}^n$, then $f_1(v)$ satisfies equation (\ref{eq1}) with some other vector $b_{\alpha} = a_{\alpha} - \phi^T_{\alpha} (y|_{\alpha}) \in \alpha$,
\begin{equation}\label{shift}
f_1(\phi_{\alpha}(v)) + b_{\alpha} \cdot v = f(\phi_{\alpha}(v)) + y \cdot \phi_{\alpha}(v) + (a_{\alpha} - \phi_{\alpha}^T(y|_{\alpha})) \cdot v = g(v).
\end{equation}

Here, $\phi_{\alpha}^T$ stands for the conjugate of $\phi_{\alpha}$, and $y|_{\alpha}$ is the projection of $y$ on $\alpha$.

In the case $n=3$, for any two-dimensional plane $\alpha$ passing through the origin there exists $\xi \in S^2$, such that $\xi^{\perp} = \alpha$. In this case, we will denote $\phi_{\alpha}$ by $\phi_{\xi}$.
It is well-known that any rotation in $SO(3)$ is determined by an axis of rotation and an angle of rotation (Euler's rotation theorem). Following \cite{Go}, for a fixed orientation in $\mathbb{E}^3$ we consider the map $\Phi: S^2 \to SO(3)$, defined as $\Phi(\xi) = (\xi,\varphi_{\xi})$, i.e. $\Phi(\xi)$ is a rotation around the direction $\xi$ by the angle $\varphi_{\xi}$, whose restriction to $\xi^{\perp}$ coincides with the rotation $\phi_{\xi}$ in equation (\ref{eq1}). Here $\varphi_{\xi}\in [-\pi,\pi]$ is the least angle of rotation (in absolute value), corresponding to the rotation $\phi_{\xi}$; $\phi_{\xi} = \Phi(\xi)|_{\xi^{\perp}}$ and we write $ \phi_{\xi} \in (SO(2), \xi^{\perp})$. We identify the ends of the interval $[-\pi,\pi]$, since the plane rotations by the angle $\pi$ and $-\pi$ coincide. We see that $\varphi_{-\xi} = -\varphi_{\xi}$ and instead of $\Phi$ we will consider the map $\varphi: S^2 \to [-\pi,\pi]$, $\varphi(\xi)=\varphi_{\xi} $.

Also, for any $\beta \in [-\pi,\pi]$ denote by $ \varphi^{-1}(\beta) = \{\xi \in S^2: \varphi_{\xi} =\beta\}$. For convenience, any great circle on $S^2$ orthogonal to $u$ will be denoted by $E(u)= S^2 \cap u^{\perp}$.

Given any twice continuously differentiable real-valued function $h(u)$ on $S^{n-1}$, \emph{the classical hedgehog} $H_h$ with support function $h$ is defined as the envelope $H_h \subset \mathbb{E}^{n}$ of the family of hyperplanes determined by $h(u) = x \cdot u$ for any $x \in \mathbb{E}^{n}$.
A projection of a classical hedgehog $H_h$ onto a subspace $\alpha$ is the envelope of hyperplanes in $\alpha$ defined by $h|_{\alpha}(u) = x \cdot u$ for  $u\in \alpha \cap S^{n-1}$ and $x \in \alpha$, which is also a classical hedgehog $H_{h|_{\alpha}} \subset \alpha$ (see \cite{MM4}).

\section{Proof of the Main Result in the Case $n=3$}

\subsection{Idea of the proof}
 The main idea of the proof of Theorem \ref{th} is to reduce the matter to the case or translations and reflections only, i.e. to show that the rotations $\phi_{\xi}$ in (\ref{eq1}) can only be trivial or by angle $\pi$.

\subsection{Plan of the proof}
Our goal is to show that $\varphi^{-1}(0) = S^2$ or $\varphi^{-1}(\pi) = S^2$.

We start by proving that, without loss of generality, one can assume that the functions in Theorem \ref{th} are odd (see Lemma \ref{even}).

Using our assumption that the restrictions of $f$ and $g$ do not satisfy the direct rigid motion symmetry equation in any  equator, we will show that the map $\varphi$ is continuous (see Lemma \ref{cont}); and that, due to the oddness of $\varphi$, $\varphi_{-\xi} = - \varphi_{\xi}$, one of the sets $\varphi^{-1}(0)$ or $\varphi^{-1}(\pi)$ is not empty. In fact, we show that if $S^2 \neq \varphi^{-1}(0) \cup \varphi^{-1}(\pi)$, then one of the sets $\varphi^{-1}(0)$ or $\varphi^{-1}(\pi)$ intersects all meridians joining $u_0$ and $-u_0$, where $u_0 \in S^2 \setminus (\varphi^{-1}(0) \cup \varphi^{-1}(\pi))$ (see Lemma \ref{homology}).

In order to show that $\varphi^{-1}(0) = S^2$ or $\varphi^{-1}(\pi) = S^2$, we will prove that it is enough to consider two cases: the set $\varphi^{-1}(0)$ (or $\varphi^{-1}(\pi)$) is not a great circle and $\varphi^{-1}(0)$ (or $\varphi^{-1}(\pi)$) is a great circle.

If the set $\varphi^{-1}(0)$  (or $\varphi^{-1}(\pi)$) is not a great circle on $S^2$, our argument is based on the observation that $\varphi^{-1}(0)$ contains three non-coplanar vectors (see Lemma \ref{dense}). This helps to reduce the proof to the case of translations and reflections only (see Lemma \ref{suss2} and the argument after it).

If the set $\varphi^{-1}(0)$  (or $\varphi^{-1}(\pi)$) is a great circle on $S^2$, we use the result from \cite{S} and reduce condition (\ref{eq1}) to a similar equation on support functions of convex bodies of constant width (see (\ref{eq2}) and Lemma \ref{width}).

 We finish the proof in the case $n=3$ by showing that, for convex bodies of constant width, Hadwiger's result \cite{Ha} holds for a circle of directions $\varphi^{-1}(0)$ instead of a cylindrical set of directions.

\subsection{Auxiliary Lemmata}
Our first observation is

\begin{lemma}\label{even}
If $f$ and $g$ verify equation (\ref{eq1}) in the case $n=3$, then $f_e = g_e$ on $S^2$.
\end{lemma}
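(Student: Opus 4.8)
The plan is to first eliminate the translation terms $a_\xi$ by exploiting the antipodal symmetry of each equator, and then to recognize the resulting identity as an equality of spherical Radon (Funk) transforms of the even parts.

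Fix an equator $E(\xi) = S^2 \cap \xi^{\perp}$. For every $u \in E(\xi)$, equation (\ref{eq1}) reads $f(\phi_\xi(u)) + a_\xi \cdot u = g(u)$. Since $-u \in E(\xi)$ as well and $\phi_\xi$ is linear (so $\phi_\xi(-u) = -\phi_\xi(u)$), replacing $u$ by $-u$ yields $f(-\phi_\xi(u)) - a_\xi \cdot u = g(-u)$. Adding these two relations cancels the linear term and, after dividing by $2$, produces
\begin{equation*}
f_e(\phi_\xi(u)) = g_e(u) \qquad \forall u \in E(\xi),
\end{equation*}
so that the even parts satisfy the same congruence on each equator, but now \emph{without} any translation.

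Next I would integrate this identity over the equator $E(\xi)$ with respect to arc length. Because $\phi_\xi$ is a rotation of the plane $\xi^{\perp}$, it preserves the length measure on $E(\xi)$, whence $\int_{E(\xi)} f_e(\phi_\xi(u))\,du = \int_{E(\xi)} f_e(u)\,du$. Combining this with the previous display gives
\begin{equation*}
\int_{E(\xi)} f_e(u)\,du = \int_{E(\xi)} g_e(u)\,du \qquad \text{for every } \xi \in S^2,
\end{equation*}
that is, the spherical Radon (Funk) transforms of $f_e$ and $g_e$ coincide on all of $S^2$.

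Finally, the conclusion $f_e = g_e$ follows from the injectivity of the Funk transform on even functions: the difference $f_e - g_e$ is an even (indeed $C^2$) function whose integral over every great circle vanishes, and such a function must be identically zero. The only genuinely nontrivial ingredient is this injectivity of the spherical Radon transform on even functions, which is a classical fact; everything else reduces to the elementary manipulation of (\ref{eq1}) above together with the rotation-invariance of arc length. I expect no serious obstacle here, since the passage from the pointwise congruence on equators to the equality of integrals is immediate once the linear term has been removed by the antipodal substitution.
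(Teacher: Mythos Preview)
Your proof is correct and follows essentially the same argument as the paper: cancel the linear term to obtain $f_e(\phi_\xi(u))=g_e(u)$ on each equator, integrate using rotation-invariance of arc length, and invoke the injectivity of the Funk transform on even functions. The only difference is cosmetic---you spell out the antipodal substitution that the paper summarizes as ``comparing the even parts.''
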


\begin{proof}
Comparing the even parts of equation (\ref{eq1}) we have:
$$
f_e(\phi_{\xi} (u)) = g_e(u) \quad \textrm{for any} \quad \xi \in S^2 \quad \textrm{and} \quad u \in \xi^{\perp} \cap S^2.
$$

Applying the Funk transform, and using the invariance of the Lebesgue measure under rotations, we obtain:
$$
\int_{E(\xi)} f_e(\phi_{\xi} (u)) d\sigma(u) = \int_{E(\xi)} f_e(u) d\sigma(u)= \int_{E(\xi)} g_e(u) d\sigma(u).
$$
Since the Funk transform is injective on even functions (see \cite{He}, Corollary 2.7, p. 128), we obtain the desired result.
\end{proof}

By the previous lemma, from now on we may assume that our functions $f$ and $g$ are odd.

If $\varphi \equiv 0$, Theorem \ref{th}  follows from

\begin{lemma}[cf. \cite{R3}] \label{suss}
Let $f, g$ be two continuous functions on $S^{n-1}$ such that for any $\xi \in S^{n-1}$ there exists $a_{\xi} \in \xi^{\perp}$ and
$$
f(u) + a_{\xi} \cdot u = g(u) \quad \textrm{for any} \quad u \in E(\xi).
$$
Then there exists $b \in \mathbb{E}^n$, such that $g(u) = f(u) + b \cdot u$ for any $u \in S^{n-1}$.
\end{lemma}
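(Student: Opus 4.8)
The plan is to work with the single difference $d := g - f$, a continuous function on $S^{n-1}$. The hypothesis then reads: for every unit vector $\xi$ there is a vector $a_\xi \in \xi^\perp$ with $d(u) = a_\xi\cdot u$ for all $u \in E(\xi) = S^{n-1}\cap\xi^\perp$. The target is to glue this a priori unrelated family of linear functionals $\{a_\xi\}$, one on each equator, into a single global vector $b$, i.e.\ to show that $d$ is the restriction to $S^{n-1}$ of a linear form $u \mapsto b\cdot u$.

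The engine of the argument is a pointwise identity along great circles. Given any two orthonormal vectors $v,w\in S^{n-1}$, the hypothesis $n\ge 3$ guarantees a unit vector $\xi$ orthogonal to $\mathrm{span}(v,w)$, and then the whole great circle $\{v\cos t + w\sin t : t\in\mathbb{R}\}$ lies in $E(\xi)$. Evaluating $d = a_\xi\cdot(\cdot)$ along this circle and reading off $t=0$ and $t=\pi/2$ gives $a_\xi\cdot v = d(v)$ and $a_\xi\cdot w = d(w)$, whence
\[
d(v\cos t + w\sin t) = d(v)\cos t + d(w)\sin t \qquad \text{for all } t .
\]
Already $t=\pi$ forces $d(-v) = -d(v)$, so $d$ is odd.

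From here I would pass to the one-homogeneous extension $L\colon \mathbb{E}^n\to\mathbb{R}$ defined by $L(0)=0$ and $L(x) = |x|\,d(x/|x|)$; oddness of $d$ makes $L$ homogeneous of degree one for every real scalar. Writing two orthogonal nonzero vectors as $p=|p|v$, $q=|q|w$ with $v\perp w$ orthonormal and normalizing by $\rho=|p+q|$, the displayed identity turns into orthogonal additivity $L(p+q)=L(p)+L(q)$. Fixing an orthonormal basis $e_1,\dots,e_n$ and expanding $x=\sum_i x_i e_i$, each partial sum is orthogonal to the next term, so iterating orthogonal additivity and using homogeneity yields $L(x)=\sum_i x_i L(e_i) = b\cdot x$ with $b=\sum_i L(e_i)\,e_i$. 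Restricting back to $S^{n-1}$ gives $d(u)=b\cdot u$, that is $g(u)=f(u)+b\cdot u$, as required.

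The whole difficulty is concentrated in the first reduction: the data are only the values $a_\xi\cdot u$ on each separate equator, with no compatibility built in, and the point is that the great-circle identity is exactly the compatibility one needs. Once it is in hand, the remaining step — that orthogonal additivity together with full homogeneity forces genuine linearity — is routine, and continuity of $d$ plays no essential role beyond ensuring the objects are well defined. I therefore expect no serious obstacle; the lemma is in essence the statement that a function which is linear on every great circle through an orthonormal pair must be the trace of a linear functional.
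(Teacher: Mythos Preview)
Your argument is correct and shares the paper's core idea: form the difference $F=g-f$, extend it one-homogeneously to $\mathbb{E}^n$, and prove linearity by using that any two directions lie in some $\xi^\perp$ on which $F$ already agrees with the linear map $a_\xi\cdot(\cdot)$. The paper's execution is a touch shorter: instead of restricting to \emph{orthonormal} $v,w$, deducing only orthogonal additivity, and then iterating along a basis, it takes arbitrary $v_1,v_2\in S^{n-1}$, picks $\xi\perp\mathrm{span}\{v_1,v_2\}$, and reads off $F(c_1v_1+c_2v_2)=a_\xi\cdot(c_1v_1+c_2v_2)=c_1F(v_1)+c_2F(v_2)$ in one stroke---so your oddness observation and orthogonal-additivity detour, while perfectly valid, are not actually needed.
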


\begin{proof}
For any $u \in S^{n-1}$, consider $F(u)=g(u) - f(u)$ and extend it to $\mathbb{E}^n$ by homogeneity of degree $1$. Then $F(u)$ is continuous on $S^{n-1}$ and for a fixed $\xi \in S^{n-1}$ and any $x \in \xi^{\perp}$ we have $F(x) = a_{\xi} \cdot x$.

We claim that $F$ is linear in $\mathbb{E}^n$. Choose any $v_1, v_2 \in S^{n-1}$ and $c_1,c_2 \in \mathbb{R}$. Then,
$$
F(c_1v_1 + c_2v_2) = a_{\xi} \cdot (c_1v_1 + c_2v_2)
$$
for $\xi \perp span\{v_1,v_2\}$. On the other hand, we have
$$
c_1F(v_1)+c_2F(v_2) = c_1\left(a_{\xi} \cdot v_1\right)+ c_2\left(a_{\xi} \cdot v_2\right),
$$
since $v_1 \perp \xi$ and $v_2 \perp \xi$.
The linearity $F(c_1 v_1 + c_2 v_2) = c_1 F(v_1) + c_2 F(v_2)$ follows.
\end{proof}

\begin{remark}
If $\varphi \equiv \pi$, we may consider the function $f(-u)$ instead of $f(u)$ to conclude that $g(u) = f(-u)+b\cdot u$.
\end{remark}

\begin{lemma}\label{cont}
Let $f, g$ be two continuous functions on $S^2$. If the restrictions of $f, g$ do not satisfy the direct rigid motion symmetry equation in any equator, then the map $\varphi(\xi) = \varphi_{\xi}, \varphi: S^2 \to [-\pi,\pi]$ is continuous on $S^2$
\end{lemma}

\begin{proof}
Let $w_0$ be any point on $S^2$. Consider a sequence of points $\{w_m\}_{m \in \mathbb{N}}$ on $S^2$ such that $\lim_{n\to \infty}w_n = w_0$, and assume that $ \lim_{n\to \infty} \varphi_{w_n} \neq \varphi_{w_0}$. Since $S^2$ is a compact set, there exists a subsequence $\{w_{m_l}\}$, for which $\lim_{l \to \infty} \varphi_{w_{m_l}}= \varphi_1 \neq \varphi_{w_0}$. This implies that
$$
f(\varphi_{w_0}(u)) + b_{w_0} \cdot u = g(u) \quad \textrm{for any} \quad u \in w_0^{\perp} \quad \textrm{and some} \quad b_{w_0} \in w_0^{\perp},
$$
and
$$
f(\varphi_1(u)) + a_{w_0} \cdot u = g(u) \quad \textrm{for any} \quad u \in w_0^{\perp}  \quad \textrm{and some} \quad a_{w_0} \in w_0^{\perp}.
$$

Combining the above two equations, we obtain
$$
f(\varphi_{w_0}(u)) + (b_{w_0}-a_{w_0}) \cdot u= f(\varphi_1(u)) \quad \textrm{for any} \quad u \in w_0^{\perp}.
$$

The last equation is the equation of the direct rigid motion symmetry for the function $f$, that cannot be satisfied by the condition of the lemma. Thus, we obtain a contradiction to the assumption of discontinuity of the map $\varphi$.
\end{proof}

Now assume
\begin{equation}\label{contradiction}
\exists u_0 \in S^2: 0 < \varphi(u_0) < \pi.
\end{equation}
Consider the set of all meridians $M(u_0)=\{m_t:t \in [0,2\pi]\}$ that connect $u_0$ and $-u_0$. Each meridian $m_t$ corresponds to a unique point of intersection with $E(u_0)$ and the great circle $E(u_0)$ can be parameterized by the natural parameter $t$.


Our next Lemma is Lemma 2.1.2, from (\cite{Go}, p. 15). We give a more detailed proof for the convenience of the reader.

\begin{lemma}\label{homology}
Let $\varphi$ be continuous on $S^2$ and assume (\ref{contradiction}) holds. Then one of the sets $\varphi^{-1}(0)$ or $\varphi^{-1}(\pi)$ intersects all the meridians in $M(u_0)$.
\end{lemma}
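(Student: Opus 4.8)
\emph{Proof idea.} The plan is to track, along each meridian, the way the continuous path $s\mapsto\varphi$ travels on the target circle, and to show that a single integer governs this travel uniformly over the whole family of meridians. First I would record the data at the two poles: by the oddness $\varphi_{-\xi}=-\varphi_{\xi}$ and assumption (\ref{contradiction}) we have $\varphi(u_0)=\beta$ and $\varphi(-u_0)=-\beta$ with $0<\beta<\pi$. Recall that the range $[-\pi,\pi]$ carries the identification $\pi\sim-\pi$, so it is a circle, which I will write as $S^1$ with covering projection $p:\mathbb{R}\to S^1$ for which $p^{-1}(0)=2\pi\mathbb{Z}$ and $p^{-1}(\pi)=\pi+2\pi\mathbb{Z}$. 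The two distinguished values $0$ and $\pi$ cut $S^1$ into the open arcs $(0,\pi)$ and $(-\pi,0)$ containing $\beta$ and $-\beta$ respectively, so any path from $\beta$ to $-\beta$ must cross $\varphi^{-1}(0)$ or $\varphi^{-1}(\pi)$. The content of the lemma is that which of the two is forced does not depend on the meridian.

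Next I would introduce a continuous parameterization $\Sigma:E(u_0)\times[0,1]\to S^2$ of the family $M(u_0)$, with $\Sigma(\cdot,0)\equiv u_0$ and $\Sigma(\cdot,1)\equiv -u_0$, where $s\mapsto\Sigma(e,s)$ traces the meridian through the equator point $e\in E(u_0)$, and set $\Psi=\varphi\circ\Sigma:E(u_0)\times[0,1]\to S^1$. Since $\Psi$ is constant ($\equiv\beta$) on $E(u_0)\times\{0\}$, whose homotopy class generates $\pi_1\big(E(u_0)\times[0,1]\big)\cong\mathbb{Z}$, the induced homomorphism $\Psi_*$ on fundamental groups is trivial; the lifting criterion for the universal cover $p$ then yields a continuous lift $\tilde\Psi:E(u_0)\times[0,1]\to\mathbb{R}$ which I normalize by $\tilde\Psi(\cdot,0)\equiv\beta$. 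The restriction $\tilde\Psi(\cdot,1)$ is continuous on the connected circle $E(u_0)$ and takes values in the discrete set $p^{-1}(-\beta)=\{-\beta+2\pi k:k\in\mathbb{Z}\}$, hence it is a constant $-\beta+2\pi k_0$ for a single integer $k_0$ independent of the meridian.

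Finally I would read off the conclusion by the intermediate value theorem applied to each lifted meridian $s\mapsto\tilde\Psi(e,s)$, which runs from $\beta$ to $-\beta+2\pi k_0$. If $k_0=0$, it goes from $\beta>0$ to $-\beta<0$ and must attain $0\in\mathbb{R}$, whose image under $p$ is $0$, so the corresponding point of the meridian lies in $\varphi^{-1}(0)$; if $k_0\neq 0$, the endpoint lies beyond $\pi$ (for $k_0\geq 1$) or below $-\pi$ (for $k_0\leq-1$) while the start lies in $(0,\pi)$, so the lift attains a point of $\pi+2\pi\mathbb{Z}$ and the meridian meets $\varphi^{-1}(\pi)$. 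As $k_0$ is the same for all meridians, one and the same set, $\varphi^{-1}(0)$ when $k_0=0$ and $\varphi^{-1}(\pi)$ otherwise, intersects all of $M(u_0)$, as claimed.

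The main obstacle is the topological bookkeeping forced by the identification $\pi\sim-\pi$: the target is a circle rather than an interval, and the invariant separating the ``$0$-crossing'' meridians from the ``$\pi$-crossing'' ones is precisely the winding integer $k_0$, whose constancy over the connected family, obtained from the single lift on the cylinder $E(u_0)\times[0,1]$, is the crux of the argument. The strict inequalities $0<\beta<\pi$ supplied by (\ref{contradiction}) are exactly what place the endpoints in the interiors of the complementary arcs and keep this dichotomy sharp.
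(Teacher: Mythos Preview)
Your argument is correct and is a genuinely different (and cleaner) route than the paper's. The paper identifies the poles of each meridian to get a circle $\tilde m_t\cong S^1$ and simultaneously identifies the pairs $\{\pm\varphi(u_0)\}$ and $\{\pm\pi\}$ in the target to obtain maps $\mu_t:S^1\to\infty$ into a figure eight; it then invokes first homology, reads off a pair $(n_1,n_2)\in H_1(\infty)\cong\mathbb{Z}\oplus\mathbb{Z}$, argues that $n_1+n_2$ is odd and that $(n_1,n_2)$ is homotopy–invariant in $t$, and concludes that one of $0,\pi$ is hit by every $\mu_t$. You instead keep each meridian as an arc, assemble the whole family into one map $\Psi$ on the cylinder $E(u_0)\times[0,1]$, lift it through the universal cover $\mathbb{R}\to S^1$ using that $\Psi$ is constant on $E(u_0)\times\{0\}$ (so $\Psi_*$ is trivial on $\pi_1$), and extract a \emph{single} integer $k_0$ from the fact that $\tilde\Psi(\cdot,1)$ is a continuous map from a connected set into the discrete fiber $p^{-1}(-\beta)$. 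The intermediate value theorem then finishes the job.

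What each approach buys: your lifting argument avoids the figure–eight construction and homology entirely, and the constancy of $k_0$ over the connected equator is exactly the ``same choice for every meridian'' statement the lemma needs; the paper's pair $(n_1,n_2)$ in principle records finer information (separate algebraic counts of $0$– and $\pi$–crossings), but only the parity/nonvanishing is used. One small remark: the existence of a continuous $\Sigma:E(u_0)\times[0,1]\to S^2$ with $\Sigma(\cdot,0)\equiv u_0$, $\Sigma(\cdot,1)\equiv -u_0$ and $\Sigma(e,\tfrac12)=e$ is of course just the latitude–longitude parameterization, but it is worth saying explicitly that this is the point where the meridian family becomes a map on a \emph{connected} domain, since that connectedness is what forces $k_0$ to be constant.
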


\begin{proof}
Parameterize each meridian $m_t = m_t(s)$ by a natural parameter $s \in [-\frac{\pi}{2},\frac{\pi}{2}]$, such that for any $t$ we have $u_0 = m_t(\frac{\pi}{2})$ and $-u_0 = m_t(-\frac{\pi}{2})$. By the continuity of $\varphi$, we see that the restriction $\varphi|_{m_0}$ of $\varphi$ to the meridian $m_0$ satisfies
\begin{equation}\label{hom}
Im(\varphi|_{m_0}) \cap \{\pi\} \neq \emptyset \quad \textrm{or} \quad Im(\varphi|_{m_0}) \cap \{0\} \neq \emptyset.
\end{equation}
Similarly, one can obtain the analogue of the above for any $m_t$, $t \in [0, 2\pi]$. The idea of the proof is to use the fact that homotopy equivalent spaces (meridians) have isomorphic homology groups (see \cite{Hat}, p. 111). Let $\tilde{m}_0$ be the meridian $m_0$ with its poles identified, so that it becomes $S^1$ (see Figure \ref{8}). Consider the map
$$
\mu_0 = \varphi|_{\tilde{m}_0}: S^1 \to \textrm{``figure eight"} \infty.
$$

Here, $\mu_0$ maps the meridian $\tilde{m}_0$ with the identified poles $u_0$ and $-u_0$ into the interval $[-\pi,\pi]$, where the pair of points $-\varphi(u_0)$ and $\varphi(u_0)$ and also $-\pi$ and $\pi$ are identified respectively, so that it looks like $\infty$.

\begin{figure}
  \centering
  \includegraphics[width=12cm]{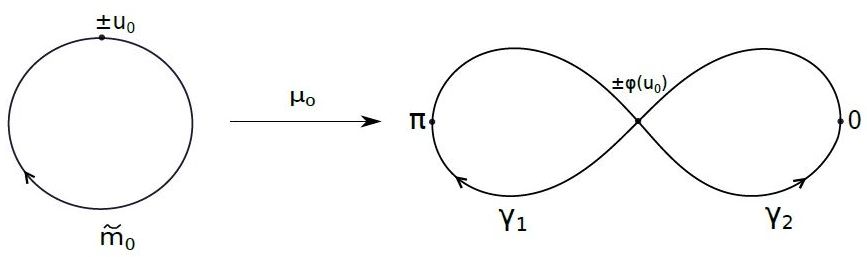}\\
  \caption{Identifying poles of the meridian $m_0$}\label{8}
\end{figure}

It is known (see \cite{Hat}, p. 106, and Exercise 31, p. 158) that the first homology groups $H_1$ of the spaces $S^1$ and $\infty$ are
$$
H_1(S^1) \cong \mathbb{Z} \quad \textrm{and} \quad H_1(\infty) \cong \mathbb{Z} \oplus \mathbb{Z}.
$$

For the induced homomorphism $hom_{\mu_0}$ (see \cite{Hat}, p.111) corresponding to $\mu_0$,
$$
hom_{\mu_0}: \mathbb{Z} \rightarrow \mathbb{Z} \oplus \mathbb{Z},
$$
consider the image of $1 \in \mathbb{Z}$, $hom_{\mu_0}(1) = (n_1, n_2) \in \mathbb{Z} \oplus \mathbb{Z}$. Here, $n_1$ corresponds to number of times we loop around the left circle of $\infty$ (on the picture loop $\gamma_1$ going clockwise) and $n_2$ corresponds to the the number of times we loop around the right one (on the picture loop $\gamma_2$ going counterclockwise). The element $1 \in \mathbb{\mathbb{Z}}$ can be thought of as a continuous loop on $S^1$ with the beginning at $-u_0$ and the end at the point $u_0$, where these two points are identified.

For each meridian $m_t$ we similarly identify the poles $-u_0$ and $u_0$ to obtain $\tilde{m}_t$, $t \in [0, 2 \pi]$, $s \in [-\frac{\pi}{2},\frac{\pi}{2}]$. We consider a continuous homotopy $T(t,s) = \tilde{m}_t(s)$. The homotopy of meridians defines the homotopy $\tilde{\mu}_t$ of the mapping $\mu_0$ as the restriction $\varphi|_{\tilde{m}_t}$,
$$
\tilde{\mu}_t = \varphi|_{\tilde{m}_t}: S^1 \to \textrm{''figure eight"} \infty, \quad \textrm{such that} \quad \mu_t(\pm u_0) = \pm \varphi(u_0).
$$
  By \cite{Hat} (p. 111, Proposition 2.9) we have, $hom_{\mu_0} = hom_{\mu_t}$, and we conclude that $(n_1,n_2)$ does not depend on $t$.

Now we claim that the number $n_1 + n_2$ is odd. If we start changing the parameter $s$ on $\tilde{m}_t(s)$ continuously (from $-\pi/2$ to $\pi/2$) then the image of the map $\tilde{\mu}_t$ is a continuous path on $\infty$ with the beginning at $-\varphi(u_0)$ and the end at $\varphi(u_0)$ (which are identified). This path loops around each side of $\infty$ a number of times. Looping once around either side is equivalent to having a path starting at $-\varphi(u_0)$ and ending at $\varphi(u_0)$, looping twice is equivalent to having a path starting at $-\varphi(u_0)$ and ending at the same point. The same idea can be extended to any even or odd number of loops: if we loop around either side of $\infty$ an odd number of times we start at the point $-\varphi(u_0)$ and end at the point $\varphi(u_0)$; if we loop around either side of $\infty$ an even number of times we start and end at the same point $-\varphi(u_0)$. By adding the number of loops around each side we see that the number of loops $n_1+n_2$ must be odd.

Since $n_1+n_2$ is odd for any $t$, either $n_1$ or $n_2$ is odd. We conclude that we loop around at least one side of $\infty$. Indeed, assume that $\{\pi\}$ is not in the image of $\tilde{\mu}_t$. Then we do not loop around the left circle of $\infty$ at all, in which case $n_1 = 0$. If, on the other hand, $\{0\}$ is not in the image of $\tilde{\mu}_t$, then we do not loop around the right circle of $\infty$ at all, in which case $n_2 = 0$. Thus, either $\{0\}$ or $\{\pi\}$ is in the image of $\tilde{\mu}_t$ for any $t \in [0,2 \pi]$.
\end{proof}

\begin{remark}
Since we can reflect the function $f$ by considering $f(-u)$ instead of $f(u)$, $u \in S^2$, from now on we consider the case when $\varphi^{-1}(0)$ intersects all the meridians in $M(u_0)$.
\end{remark}

The next observation is needed in Lemma \ref{dense}

\begin{lemma}[cf. \cite{Go}, p. 16, Corollary 2.1.1]\label{hom}
If every meridian from $M(u_0)$ intersects $\varphi^{-1}(0)$ at a single point, then $\varphi^{-1}(0)$ is homeomorphic to a circle.
\end{lemma}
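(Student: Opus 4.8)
The plan is to exhibit an explicit homeomorphism from the parameter circle onto $\varphi^{-1}(0)$ by recording, for each meridian $m_t \in M(u_0)$, the location of its unique intersection point with $\varphi^{-1}(0)$. First I would observe that the poles cannot lie in $\varphi^{-1}(0)$: since $0 < \varphi(u_0) < \pi$ and $\varphi_{-u_0} = -\varphi_{u_0}$, we have $\varphi(u_0) \neq 0$ and $\varphi(-u_0) \neq 0$. Consequently, for each $t \in [0,2\pi]$ the unique intersection of $m_t$ with $\varphi^{-1}(0)$ occurs at an interior parameter value, and one may define $s(t) \in (-\tfrac{\pi}{2},\tfrac{\pi}{2})$ to be the unique $s$ with $\varphi(m_t(s)) = 0$. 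Recalling that distinct meridians in $M(u_0)$ meet only at the poles $\pm u_0$, and that $m_t(0)$ traces out $E(u_0)$ as $t$ runs over $[0,2\pi]$, the candidate map is $p(t) = T(t,s(t))$, where $T$ is the continuous homotopy $T(t,s) = \tilde{m}_t(s)$ introduced in Lemma \ref{homology}; since $m_0 = m_{2\pi}$ we have $p(0) = p(2\pi)$, so $p$ descends to a map on $S^1 = [0,2\pi]/(0\sim 2\pi)$.

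The heart of the argument is the continuity of the selection $t \mapsto s(t)$, which I would establish by a closed-graph/compactness argument analogous to that of Lemma \ref{cont}. Fix $t_0$ and a sequence $t_k \to t_0$. Because $[-\tfrac{\pi}{2},\tfrac{\pi}{2}]$ is compact, any subsequence of $\{s(t_k)\}$ has a further subsequence converging to some $s^*$; by continuity of $\varphi$ and of $T$ one gets $\varphi(m_{t_0}(s^*)) = \lim \varphi(m_{t_k}(s(t_k))) = 0$, and since $s^* \neq \pm\tfrac{\pi}{2}$ (the poles are excluded), the single-intersection hypothesis forces $s^* = s(t_0)$. As every subsequential limit equals $s(t_0)$, we conclude $s(t_k) \to s(t_0)$, so $s$ is continuous and hence so is $p$.

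It then remains to check that $p$ is a bijection onto $\varphi^{-1}(0)$ and to invoke compactness. Surjectivity follows because the meridians of $M(u_0)$ cover $S^2 \setminus \{u_0,-u_0\}$ and $\varphi^{-1}(0)$ is contained in this punctured sphere, so every point of $\varphi^{-1}(0)$ lies on some $m_t$ and is the point $p(t)$. Injectivity on $S^1$ follows from the observation that two distinct meridians intersect only at $\pm u_0$, which are not in the image of $p$; hence $p(t_1) = p(t_2)$ with $t_1,t_2 \in [0,2\pi)$ forces $t_1 = t_2$. Finally, $p$ is a continuous bijection from the compact space $S^1$ onto the Hausdorff subspace $\varphi^{-1}(0) \subset S^2$, and therefore a homeomorphism. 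I expect the main obstacle to be the continuity of the selection $s(t)$, and in particular the need to rule out escape of the intersection parameter to the excluded endpoints $\pm\tfrac{\pi}{2}$; this is precisely where the hypotheses $0 < \varphi(u_0) < \pi$ and the single-intersection assumption are used.
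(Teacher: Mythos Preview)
Your proof is correct and follows essentially the same approach as the paper: both define a map from a circle (the paper uses $E(u_0)$ directly, you use the parameter circle $S^1$) to $\varphi^{-1}(0)$ along the meridians, and both establish continuity by a compactness/subsequential-limit argument exploiting the single-intersection hypothesis. The only cosmetic difference is that the paper verifies continuity of the inverse by repeating the argument with the roles of $E(u_0)$ and $\varphi^{-1}(0)$ interchanged, whereas you invoke the standard fact that a continuous bijection from a compact space to a Hausdorff space is a homeomorphism; your version is slightly cleaner and also makes explicit the exclusion of the poles, which the paper leaves implicit.
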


\begin{proof}
Since $\varphi$ is continuous, the set $\varphi^{-1}(0)$ is closed.

Consider the map $\Pi: E(u_0) \to \varphi^{-1}(0)$ that maps any point $x \in E(u_0)$ to the point $\Pi(x) \in \varphi^{-1}(0)$, such that $x$ and $\Pi(x)$ belong to the same meridian from $M(u_0)$. The map $\Pi$ is well-defined according to the statement of the lemma. For any point $x \in E(u_0)$, consider a sequence $\{x_k\}_{k \in \mathbb{N}} \subset E(u_0)$, such that $\lim_{k \to \infty} x_k = x$.

Now consider the sequence $\{\Pi(x_k)\}_{k \in \mathbb{N}}$. If $\lim_{k \to \infty} \Pi(x_k)$ does not exist, then $\limsup_{k \to \infty} \Pi(x_k) \neq \liminf_{k \to \infty} \Pi(x_k)$. Then there exists a meridian that contains the two distinct points $\liminf_{k \to \infty} \Pi(x_k)$ and $\limsup_{k \to \infty} \Pi(x_k)$ of the set $\varphi^{-1}(0)$, which contradicts the assumption of the lemma.

If $\lim_{k \to \infty} \Pi(x_k) = z$, the point $z$ belongs to the set $\varphi^{-1}(0)$, since $\varphi^{-1}(0)$ is closed.  If $z \neq \Pi(x)$, using the same argument as above we obtain a contradiction. Thus $\Pi(x) = z$ and $\Pi$ is continuous.

Observe that in the above argument we may interchange the sets $\varphi^{-1}(0)$ and $E(u_0)$ to obtain the continuity of the map $\Pi^{-1}$. Thus $\varphi^{-1}(0)$ is homeomorphic to circle $E(u_0)$.
\end{proof}

\subsection{Case 1: $\varphi^{-1}(0)$ is not a great circle on $S^2$}

\begin{lemma}[cf. \cite{Go}, p. 16, Lemma 2.1.3]\label{dense}
If $\varphi^{-1}(0)$ is not a great circle on $S^2$, then there exist two non-parallel vectors $u_1, u_2 \in \varphi^{-1}(0)$, such that for a dense set of parameters $t\in [0,2\pi]$ the corresponding meridians $m(t)$ in $M(u_0)$ intersect the set $\varphi^{-1}(0)$ at points that are not coplanar with $u_1$ and $u_2$.
\end{lemma}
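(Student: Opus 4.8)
The plan is to work with the closed, antipodally symmetric set $N=\varphi^{-1}(0)$: it is closed because $\varphi$ is continuous (Lemma \ref{cont}), it is symmetric under $\xi\mapsto-\xi$ because $\varphi$ is odd, and under our standing assumption it meets every meridian in $M(u_0)$, so it projects onto all of $E(u_0)$ under the map $\pi$ sending a point to the intersection of its meridian with $E(u_0)$; note also that $\pm u_0\notin N$, since $0<\varphi(u_0)<\pi$. First I would upgrade the hypothesis ``$N$ is not a great circle'' to the stronger statement that $N$ is not contained in any great circle. Suppose $N\subseteq C'$ for a great circle $C'$; then $N\neq C'$, so $C'\setminus N\neq\emptyset$. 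If $u_0\notin C'$, then $u_0$ and $-u_0$ lie in opposite open hemispheres bounded by $C'$, so each meridian crosses $C'$ in exactly one interior point; choosing $p\in C'\setminus N$ we find that the meridian $m_{\pi(p)}$ can meet $N$ only at $p$, hence does not meet $N$ at all, contradicting that $N$ meets every meridian. If $u_0\in C'$, then $C'$ is itself a great circle through the poles, and every other meridian meets it only at $\pm u_0\notin N$, again a contradiction. Thus $N$ spans $\mathbb{E}^3$ and contains three linearly independent vectors.

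Next I would fix the pair. Among three non-coplanar points of $N$, at least one pair $u_1,u_2$ spans a plane not containing $u_0$ (otherwise $u_0$ would be common to all three planes of the triple, forcing $u_0=\pm$ one of those points, impossible as $\pm u_0\notin N$); these are non-parallel and the great circle $C=S^2\cap\mathrm{span}\{u_1,u_2\}$ avoids $\pm u_0$. Since $u_0\notin C$, each meridian meets $C$ in exactly one point, so the projection restricts to a homeomorphism $\pi|_C\colon C\to E(u_0)$; write $q_t$ for the unique point of $C$ on $m_t$. The key reformulation is then immediate: a parameter $t$ is \emph{bad}, meaning $m_t$ meets $N$ only in points coplanar with $u_1,u_2$, exactly when $m_t\cap N\subseteq C$, and since $m_t\cap N$ is nonempty and contained in $m_t\cap C=\{q_t\}$, this forces $m_t\cap N=\{q_t\}$ with $q_t\in N$. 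Hence the bad set is carried by $\pi|_C$ into $C\cap N$, and the lemma follows once we know that $C\cap N$ is nowhere dense in $C$: its homeomorphic image in $E(u_0)\cong[0,2\pi]$ is then nowhere dense, so the good parameters form a dense set.

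It remains to choose $u_1,u_2$ so that, in addition, the secant great circle $C$ meets $N$ in a nowhere-dense set; as $N$ is closed, this means $C$ should contain no arc of $N$. I expect this to be the main obstacle, since $N$ may contain geodesic subarcs and one must preclude that \emph{every} secant circle through two points of $N$ supports such an arc. The plan here is a dichotomy. If some point of $N$ has a neighborhood in which $N$ contains no nondegenerate great-circle arc, I would take $u_1,u_2$ near two such points, keeping $u_0$ off their span, so that $C$ cannot contain an arc of $N$. Otherwise $N$ is locally geodesic everywhere, hence a union of great-circle arcs; as $N$ is not a single great circle, these arcs lie on at least two distinct great circles, and choosing $u_1,u_2$ in the relative interiors of two such arcs on different supporting circles, away from the at most two points where those circles meet, yields a secant circle $C$ equal to neither supporting circle, so $C$ contains no arc of $N$. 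In either case a small perturbation keeps $u_0\notin\mathrm{span}\{u_1,u_2\}$, and combined with the reformulation of the previous paragraph this produces the dense set of parameters asserted by the lemma.
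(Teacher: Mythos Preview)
Your first two paragraphs are sound: the upgrade from ``not a great circle'' to ``not contained in any great circle'' is correct, and the reformulation of the bad set of parameters as (a subset of) $\pi(C\cap N)$ is clean and useful. The genuine gap is in the dichotomy of the third paragraph. In the first branch, knowing that $N$ contains no geodesic arc near $u_1$ and near $u_2$ does not prevent $C\cap N$ from containing an arc \emph{elsewhere} on the great circle $C$; the secant circle is a global object and your local hypothesis gives no control away from the chosen points. In the second branch, the negation of the first branch only says that every neighbourhood of every point of $N$ contains \emph{some} geodesic arc of $N$, which is far from forcing $N$ to be a union of arcs lying on finitely (or even countably) many great circles; for instance $N$ could have nonempty interior in $S^2$, in which case every secant circle through two interior points meets $N$ in a set containing an arc. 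The perturbation remark at the end does not repair this, since perturbing within $N$ may be impossible if $N$ is thin, and perturbing outside $N$ is not allowed.

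The paper's argument sidesteps this difficulty by a case split you avoided. If some meridian $m_{t_1}$ meets $N$ in at least two points, take $u_1,u_2$ on $m_{t_1}$: then the plane $\mathrm{span}\{u_1,u_2\}$ \emph{does} contain $u_0$, so every meridian $m_t$ with $t\neq t_1,\,t_1\pm\pi$ meets this plane only at $\pm u_0\notin N$, and all but two parameters are good. (Note this is exactly the configuration you excluded by insisting $u_0\notin C$.) If instead every meridian meets $N$ in a single point, Lemma~\ref{hom} gives a homeomorphism $\Pi:E(u_0)\to N$; fixing $u_1\in N$ and looking at the pencil $\{E_s\}$ of great circles through $u_1$, the sets $\Pi^{-1}(E_s\cap N\setminus\{\pm u_1\})$ are disjoint in $E(u_0)$, so by a countability argument some $E_{s_0}$ has preimage with empty interior, and one takes $u_2$ on $E_{s_0}$. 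This cardinality device is what replaces your attempted dichotomy and is the missing idea in your plan.
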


\begin{proof}
If there exists a meridian $m_{t_1}$, such that the number of points of intersection in $\varphi^{-1}(0) \cap m_{t_1}$ is greater than one, then we take any two points in this intersection to be the required $u_1$ and $u_2$. Any other meridian, except $m_{-\pi + t_1}$, intersects $\varphi^{-1}(0)$ at points that are not coplanar with the above two points.

On the other hand, if every meridian intersects $\varphi^{-1}(0)$ at a single point, then by Lemma \ref{hom} there exists a homeomorphism $\Pi:E(u_0) \to \varphi^{-1}(0)$. By the assumption, $\varphi^{-1}(0)$ is not contained in any great circle on $S^2$. For any $u_1 \in \varphi^{-1}(0)$ fixed any great circle $E_0$ passing through $u_1$ and $-u_1$ (see Figure \ref{s2}). Then consider the family $\{E_s\}_{s \in [0,2\pi]}$ of all great circles passing through $u_1$ parameterized by the angle $s$ that corresponds to the intersection of $E_s$ and $E_0$.

\begin{figure}
  \centering
  \includegraphics[width=9cm]{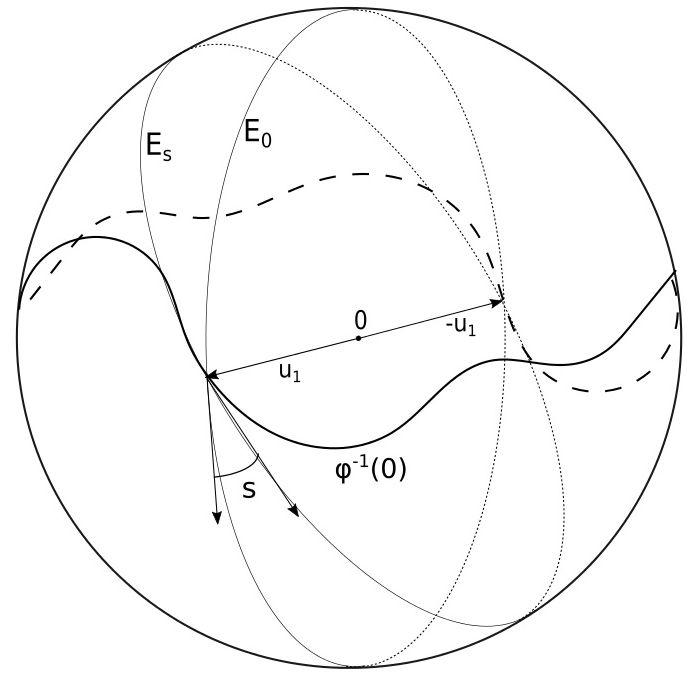}\\
  \caption{The set of directions $\varphi^{-1}(0)$}\label{s2}
\end{figure}

Let $K_s$ denote the set $ E_s \cap \varphi^{-1}(0)$. Then $K_{s_1} \cap K_{s_2} = \{u_1,-u_1\}$ if $s_1 \neq s_2$. This implies that
$$\varphi^{-1}(0) = \cupdot_{s \in [0,2\pi]} (K_s \setminus (\{u_1\} \cupdot \{-u_1\})) \cupdot \{u_1\} \cupdot \{-u_1\},
$$
where $\cupdot$ stands for a disjoint union. Setting $\Pi^{-1}(K_s \setminus (\{u_1\} \cup \{-u_1\})) = G_s \subset E(u_0)$, we have that $E(u_0) = \Pi^{-1}(\varphi^{-1}(0)) =  \cupdot_{s \in [0,2\pi]}G_s \cupdot \{\Pi^{-1}(u_1)\} \cupdot \{\Pi^{-1}(-u_1)\}$. We claim that there exists $G_{s_0}$, such that $G_{s_0}^c$ is dense, or equivalently, $int G_{s_0} = \emptyset$. Assume not, i.e. for any $s \in [0,2\pi]$ we have $int G_{s} \neq \emptyset$, then  $G_s$ contains an open interval of $E(u_0)$, and hence it contains a point that corresponds to a rational value of the parameter $t$ on $E(u_0)$. Thus we obtain a contradiction, since the number of such values of $t$  is countable, but $s$ does not belong to a countable set.

Then the set $B=\Pi(G^c_{s_0})$ is the desired set and it is dense in $\varphi^{-1}(0)$, since homeomorphisms preserve the property of density. We may take $u_2 \in B^c$. By the above, $u_2 \neq \pm u_1$.

\end{proof}

The following lemma is a functional analogue of the result from (\cite{Go}, p. 9, Lemma 1.2.2.)

\begin{lemma}\label{suss2}
Let $f, g$ be two continuous functions on $\mathbb{E}^n$, $n \geq 3$, and let $w_1,w_2,w_3 \subset S^{n-1}$ be non-coplanar vectors. If for any $u_i \in w_i^{\perp},i=1,2,3$ we have
$$
g(u_1) = f(u_1), \quad g(u_2) = f(u_2) \quad \textrm{and} \quad g(u_3) = f(u_3) + a \cdot u_3,
$$
for some $a \in w_3^{\perp}$, then $a = 0$.
\end{lemma}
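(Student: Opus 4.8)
The plan is to work entirely with the difference $F = g - f$ and to exploit the overlaps of the three hyperplanes $w_1^{\perp}, w_2^{\perp}, w_3^{\perp}$; the statement then reduces to elementary linear algebra, and the continuity of $f,g$ plays no essential role. By hypothesis $F$ vanishes identically on $w_1^{\perp}$ and on $w_2^{\perp}$, whereas $F(u) = a\cdot u$ for every $u \in w_3^{\perp}$.

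First I would restrict attention to the intersection $w_1^{\perp}\cap w_3^{\perp}$. Since the non-coplanarity of $w_1,w_2,w_3$ means precisely that these three vectors are linearly independent, $w_1$ and $w_3$ are in particular independent, so $w_1^{\perp}\cap w_3^{\perp}$ is a subspace of dimension $n-2 \ge 1$. For any $u$ in this subspace the first hypothesis gives $F(u)=0$ while the third gives $F(u)=a\cdot u$, whence $a\cdot u = 0$. Thus $a$ is orthogonal to all of $w_1^{\perp}\cap w_3^{\perp}$, i.e. $a \in (w_1^{\perp}\cap w_3^{\perp})^{\perp} = \mathrm{span}\{w_1,w_3\}$. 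Running the identical argument with $w_2$ in place of $w_1$ yields $a \in \mathrm{span}\{w_2,w_3\}$.

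Next I would intersect the two conclusions. Writing $a = \alpha w_1 + \beta w_3 = \gamma w_2 + \delta w_3$ and invoking the linear independence of $w_1, w_2, w_3$, the coefficients of $w_1$ and $w_2$ must both vanish, so $a \in \mathrm{span}\{w_3\}$; equivalently $\mathrm{span}\{w_1,w_3\}\cap\mathrm{span}\{w_2,w_3\} = \mathrm{span}\{w_3\}$. Finally, the standing hypothesis $a \in w_3^{\perp}$ forces $a\cdot w_3 = 0$, and together with $a = \lambda w_3$ this gives $\lambda\,|w_3|^{2}=0$, hence $\lambda = 0$ and $a = 0$, as claimed.

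I expect no serious obstacle in this argument; the only point requiring a little care is the bookkeeping when the vectors $u_i$ are taken on the unit sphere rather than throughout the full hyperplane. In that reading $a\cdot u = 0$ holds on the centrally symmetric set $(w_1^{\perp}\cap w_3^{\perp})\cap S^{n-1}$, which for $n=3$ degenerates to a single antipodal pair. Nonetheless, orthogonality to a set of unit vectors that spans a subspace is equivalent to orthogonality to the subspace itself, so the conclusion $a \in \mathrm{span}\{w_1,w_3\}$ survives and the remainder of the proof goes through verbatim.
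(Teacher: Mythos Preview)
Your argument is correct and essentially the same as the paper's: both define $F=g-f$, use the intersections $w_i^{\perp}\cap w_3^{\perp}$ to see that $a$ is orthogonal to each, and combine this with $a\in w_3^{\perp}$ to force $a=0$. The only cosmetic difference is dual bookkeeping: the paper argues $a\perp\mathrm{span}\{P_{1,3},P_{2,3}\}=w_3^{\perp}$, whereas you pass to orthogonal complements and write $a\in\mathrm{span}\{w_1,w_3\}\cap\mathrm{span}\{w_2,w_3\}=\mathrm{span}\{w_3\}$; these are the same statement.
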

\begin{proof}
For $i,j =1, 2, 3$, let $P_{i,j} = w_i^{\perp} \cap w_j^{\perp}$. As in the proof of Lemma \ref{suss}, consider the function $F(u) = g(u) - f(u)$. For any $u_{1,3} \in P_{1,3}$ and  $u_{2,3} \in P_{2,3}$ we have
$$
0=F(u_{1,3}) = a \cdot u_{1,3} \quad \textrm{and} \quad 0=F(u_{2,3}) = a \cdot u_{2,3}.
$$
We conclude that $a \perp P_{1,3}$ and $a \perp P_{2,3}$, and so $a \perp span\{P_{1,3},P_{2,3}\}$. On the other hand, $span\{P_{1,3},P_{2,3}\} = w_3^{\perp}$ (this is due to the fact that $\dim P_{1,3} = \dim P_{2,3} = n-2$, $P_{i,3} \subset w_3^{\perp}$ and $P_{1,3} \neq P_{2,3}$) and $a \in w_3^{\perp}$. Thus, $a =0$.
\end{proof}

We will need the following

\begin{lemma}\label{shiza}
Let $f, g$ be two continuous functions on $\mathbb{E}^n, n \geq 3$ and let $w_1, w_2 \in S^{n-1}$. If for some $a_i \in w_i^{\perp}, i=1,2$, we have
$$
f(u_i) + a_i \cdot u_i = g(u_i) \quad \forall u_i \in w_i^{\perp},
$$
then there exists $y \in \mathbb{E}^n$, such that $y \cdot u_i = a_i \cdot u_i$ for any $u_i \in w_i^{\perp}, i=1,2.$
\end{lemma}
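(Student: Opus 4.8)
The plan is to reduce the statement to a short exercise in linear algebra by passing to the single function $F = g - f$. By hypothesis, the restriction of $F$ to the hyperplane $w_i^{\perp}$ coincides with the linear functional $u \mapsto a_i \cdot u$, for $i = 1, 2$. I would first observe that the desired conclusion $y \cdot u_i = a_i \cdot u_i$ for all $u_i \in w_i^{\perp}$ is equivalent to requiring $y - a_i \perp w_i^{\perp}$, that is, $y - a_i \in \mathrm{span}\{w_i\}$. So the task is to find $y$ of the form $a_1 + \lambda_1 w_1$ which simultaneously equals $a_2 + \lambda_2 w_2$ for suitable scalars $\lambda_1, \lambda_2$.

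The key step is to extract the compatibility of $a_1$ and $a_2$ on the overlap of the two hyperplanes. For any $u \in w_1^{\perp} \cap w_2^{\perp}$ both equations apply, so $F(u) = a_1 \cdot u$ and $F(u) = a_2 \cdot u$; subtracting gives $(a_1 - a_2) \cdot u = 0$ for every $u \in w_1^{\perp} \cap w_2^{\perp}$. Hence $a_1 - a_2$ is orthogonal to $w_1^{\perp} \cap w_2^{\perp} = (\mathrm{span}\{w_1, w_2\})^{\perp}$, and therefore $a_1 - a_2 \in \mathrm{span}\{w_1, w_2\}$.

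It then suffices to write $a_1 - a_2 = \alpha w_1 + \beta w_2$ and to set $y = a_1 - \alpha w_1 = a_2 + \beta w_2$, the two expressions agreeing precisely because of the previous identity. Since $y - a_1 = -\alpha w_1 \in \mathrm{span}\{w_1\}$ and $y - a_2 = \beta w_2 \in \mathrm{span}\{w_2\}$, the vector $y$ restricts to $a_i$ on $w_i^{\perp}$ as required. The only degenerate case to dispose of is $w_1 = \pm w_2$: then $w_1^{\perp} = w_2^{\perp}$, and comparing the two equations on this common hyperplane forces $(a_1 - a_2) \cdot u = 0$ for all $u \in w_1^{\perp}$; since $a_1 - a_2$ itself lies in $w_1^{\perp}$, this yields $a_1 = a_2$, and we may take $y = a_1$. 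I do not expect any genuine obstacle here: the continuity of $f$ and $g$ plays no role beyond guaranteeing that $F$ is well defined, and the entire argument rests on the orthogonal decomposition encoded in the dimension count $\dim(\mathrm{span}\{w_1, w_2\}) + \dim(w_1^{\perp} \cap w_2^{\perp}) = n$.
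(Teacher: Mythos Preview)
Your proof is correct and follows essentially the same approach as the paper: both arguments use the overlap $w_1^{\perp}\cap w_2^{\perp}$ to deduce that $a_1-a_2\in\mathrm{span}\{w_1,w_2\}$, and then set $y=a_1-\alpha w_1=a_2+\beta w_2$. Your treatment is slightly more detailed in that you explicitly handle the degenerate case $w_1=\pm w_2$ and spell out the equivalence $y\cdot u_i=a_i\cdot u_i\iff y-a_i\in\mathrm{span}\{w_i\}$, but the substance is identical.
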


\begin{proof}
For any vector $v \in w_1^{\perp} \cap w_2^{\perp}$ we have
$$
f(v) + a_1\cdot v = g(v) = f(v) + a_2 \cdot v.
$$

The above implies that $(a_1 - a_2) \cdot v = 0$. Since the vector $v$ was chosen arbitrary, we have $a_1 - a_2 \perp w_1^{\perp} \cap w_2^{\perp}$, i.e. $a_1 - a_2 = t w_1 + s w_2$ for some $t,s \in \mathbb{R}$. Then the vector $y = a_1 - t w_1 =a_2 + sw_2 $ is the one we need.

\end{proof}

Finally, to obtain a contradiction to our assumption $0< \varphi^{-1}(0) < \pi$, consider two non-parallel vectors $u_1, u_2 \in \varphi^{-1}(0)$ and a point $x$, which belongs to the dense subset $\Pi^{-1}(B) \subset E(u_0)$ defined in Lemma \ref{dense}, so that $z=\Pi(x) \in \varphi^{-1}(0)$ is not coplanar with $u_1$ and $u_2$. Define a function $G(v)$ on $S^2$ to be $G(v) =f(v) + y \cdot v$, where $y$ is the vector obtained by applying Lemma \ref{shiza} to the vectors $u_1$ and $u_2$. Then for any $v_1 \in u_1^{\perp}$
$$
G(v_1)=f(v_1) + a_{u_1} \cdot v_1 = g(v_1)
$$
and for any $v_2 \in u_2^{\perp}$
$$
G(v_2) =f(v_2) + a_{u_2} \cdot v_2 = g(v_2).
$$

Recall that, by the argument in (\ref{shift}), the functions $f$ and $g$ satisfy the equation ($\ref{eq1}$) up to a translation. Hence, for the function $G(u)$, there also exists $a_z\in z^{\perp}$, such that $G(u) + a_z \cdot u = g(u)$ for any $u \in z^{\perp}$. Using Lemma \ref{suss2}, we see that $ a_z = 0$ for any $u \in z^{\perp}$. This implies that $G(u) = g(u)$ for any $u \in E(z).$

Notice also that the set $E(u_0) \cap \{E(z)\}_{z \in B}$ is dense in $E(u_0)$, since $B \subset \varphi^{-1}(0)$ is dense by Lemma \ref{dense}. Both $G$ and $g$ are continuous functions on the sphere, this implies that $G(u) = f(u) + y\cdot u= g(u)$ for any $u \in E(u_0)$. Thus, $\varphi(u_0) = 0$, since otherwise function $f$ would satisfy a direct rigid motion symmetry equation in $E(u_0)$. However, the previous contradicts the assumption $0<\varphi(u_0) < \pi$. We have thus proven Theorem \ref{th} under the hypothesis that $\varphi^{-1}(0)$ is not a great circle.

\subsection{Case 2: $\varphi^{-1}(0)$ is a great circle on $S^2$ }
We use a geometrical approach.

\begin{definition}[\cite{S}, p. 37]
The \emph{support function} $h_V(x)$ of a convex subset $V$ of $\mathbb{E}^n$ is defined as $h_V(x) = \sup \{ x \cdot v: v \in V\}$ for $x \in \mathbb{E}^n$.
\end{definition}

We are going to use the next result to finish the proof of Theorem \ref{th}.

\begin{theorem} [\cite{S}, p.45] \label{shn}
If $f:S^{n-1} \to \mathbb{R}$ is a twice continuously differentiable function, there exists a convex body $K$ and a number $r\geq0$ such that
$$
f(u) + r = h_K(u).
$$
\end{theorem}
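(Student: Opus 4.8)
The plan is to reduce Theorem \ref{shn} to the classical criterion that a function on the sphere is a support function exactly when its positively homogeneous extension is convex, and then to exploit the compactness of $S^{n-1}$ to absorb the second-order part of $f$ by a sufficiently large additive constant.

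First I would recall the characterization I intend to verify: a function $h$ on $S^{n-1}$ equals $h_K$ for some nonempty compact convex set $K \subset \mathbb{E}^n$ if and only if its positively homogeneous extension $\bar{h}(x) = |x|\,h(x/|x|)$, with $\bar{h}(0) = 0$, is convex (equivalently, sublinear) on $\mathbb{E}^n$; the body is then recovered as $K = \{x : x \cdot u \le h(u) \ \textrm{for all} \ u \in S^{n-1}\}$. This is the standard sublinearity criterion for support functions, which is exactly the content borrowed from \cite{S}.

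Second, since $f \in C^2(S^{n-1})$, I would translate the convexity of $\bar{f_r}$, where $f_r := f + r$, into a pointwise spectral condition. A direct computation relating the Euclidean Hessian of the one-homogeneous extension to the intrinsic derivatives of $f$ shows that, at each $u \in S^{n-1}$, the Hessian of $\bar{f_r}$ vanishes in the radial direction and, restricted to the tangent space $u^{\perp}$, equals the symmetric operator $\nabla^2 f(u) + f_r(u)\, I$, where $\nabla^2$ denotes the covariant Hessian on $S^{n-1}$ and $I$ the identity on $u^{\perp}$ (its eigenvalues are the principal radii of curvature of the associated hypersurface). Hence $\bar{f_r}$ is convex precisely when $\nabla^2 f(u) + (f(u) + r)\, I \succeq 0$ for every $u \in S^{n-1}$.

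Third, I would close the argument by compactness. As $f$ is $C^2$ on the compact manifold $S^{n-1}$, there is a constant $C$ with $\|\nabla^2 f(u)\| \le C$ and $|f(u)| \le C$ for all $u$. Choosing any $r \ge \max\{0, 2C\}$ makes $\nabla^2 f(u) + (f(u) + r)\, I$ positive semidefinite uniformly in $u$, so $\bar{f_r}$ is convex and $f_r = f + r$ is the support function of a convex body $K$, i.e. $f(u) + r = h_K(u)$ with $r \ge 0$, as claimed. The one place where genuine work is hidden is the differential-geometric identity producing $\nabla^2 f + f_r\, I$ as the tangential Hessian of the homogeneous extension; once that is established, the estimate on $\nabla^2 f$ and $f$, the choice of $r$, and the passage back to $h_K$ are all routine.
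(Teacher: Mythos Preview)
The paper does not give its own proof of Theorem~\ref{shn}; it is quoted verbatim from Schneider \cite{S} as an established result, with only the remark that any larger constant $C \ge r$ works as well. Your argument is correct and is in fact the standard proof one finds in \cite{S}: reduce to the sublinearity criterion for support functions, identify the tangential Hessian of the $1$-homogeneous extension of $f+r$ with $\nabla^2 f(u) + (f(u)+r)I$ on $u^{\perp}$, and use compactness of $S^{n-1}$ to choose $r$ large enough to force positive semidefiniteness. The only technical point you leave implicit is that the homogeneous extension is not $C^2$ at the origin, but convexity on $\mathbb{E}^n \setminus \{0\}$ together with positive homogeneity already yields sublinearity, so nothing is missing.
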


Following the proof of Theorem \ref{shn}, one can conclude that the result holds for any larger constant $C \geq r$. Then we may add such a large constant $C$ to both sides of (\ref{eq1}) and extend the functions to $\mathbb{E}^3$ by homogeneity of degree $1$ to obtain another equation
\begin{equation} \label{eq2}
\tilde{f}(\varphi_{\xi}(x))+a_{\xi} \cdot x = \tilde{g}(x) \quad \textrm{for all} \quad x\in \xi^{\perp}, \xi \in S^2,
\end{equation}
where $\tilde{f}(x) = C|x|+f(x)>0$ and $\tilde{g}(x) =C|x| + g(x)>0$ are the support functions of some convex bodies $K_1$ and $K_2$ respectively.

\begin{lemma}\label{width}
The bodies $K_1$ and $K_2$ have the same constant width in any direction.
\end{lemma}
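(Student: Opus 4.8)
The plan is to read the conclusion off the even--odd decomposition that is already in force, rather than from any geometric feature of the great circle $\varphi^{-1}(0)$. Recall that by Lemma \ref{even} and the reduction following it we may assume that $f$ and $g$ are odd, so that $f_e \equiv g_e \equiv 0$ on $S^2$. The support functions of $K_1$ and $K_2$ are $\tilde f = C|x| + f$ and $\tilde g = C|x| + g$, and since $|x|$ is even while $f,g$ are odd, the even parts on the sphere are simply $(\tilde f)_e(u) = C + f_e(u) = C$ and $(\tilde g)_e(u) = C + g_e(u) = C$ for every $u \in S^2$.

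First I would recall the identity relating width and the even part of a support function: the width of a convex body $K$ in the direction $u$ is the distance between its two supporting hyperplanes orthogonal to $u$, namely $h_K(u) + h_K(-u) = 2\,(h_K)_e(u)$ by the definition in (\ref{EvenOdd}). Applying this to $K_1$ and $K_2$ together with the computation above gives, for every $u \in S^2$,
\[
h_{K_1}(u) + h_{K_1}(-u) = 2(\tilde f)_e(u) = 2C = 2(\tilde g)_e(u) = h_{K_2}(u) + h_{K_2}(-u).
\]
Thus the width of $K_1$ and the width of $K_2$ are both equal to the constant $2C$ in every direction, which is precisely the assertion that $K_1$ and $K_2$ have the same constant width.

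I do not expect a genuine obstacle: the only inputs are the reduction to odd functions, the evenness of $|x|$, and the standard width--even-part identity. The points worth stating carefully are that the constant $C$ furnished by Theorem \ref{shn} is chosen to be the same for both $\tilde f$ and $\tilde g$ (so the two constant widths coincide, not merely that each body separately has constant width), and that adding the even term $C|x|$ leaves the oddness of the $f$- and $g$-parts untouched. I would also remark that the standing hypothesis of this subsection---that $\varphi^{-1}(0)$ is a great circle---plays no role in the present lemma; it enters only in the subsequent Hadwiger-type argument, for which the constant width established here is the essential input.
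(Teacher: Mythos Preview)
Your proof is correct and follows essentially the same approach as the paper: both use the oddness of $f$ and $g$ (from Lemma \ref{even}) to compute directly that $\tilde f(u)+\tilde f(-u)=2C=\tilde g(u)+\tilde g(-u)$. Your additional remarks---that the same constant $C$ is used for both functions and that the great-circle hypothesis is irrelevant here---are accurate and add clarity, but the core argument is identical to the paper's.
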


\begin{proof}
Recall that, after Lemma \ref{even}, we assumed that $f$ and $g$ are both odd functions. Let $w_1(u) = \tilde{f}(u)+\tilde{f}(-u)$ be the width of body $K_1$ in the direction $u$. Then
$$
w_1(u) = C+f(u)+C+f(-u) = 2C.
$$
The same can be done for $K_2$ and function $g$.
\end{proof}

It is well-known (see \cite{Ha}) that two convex bodies are translates of each other, provided that their projections in a cylindrical set of directions are translates of each other. Here, a cylindrical set of directions is the set $E(u_0) \bigcup \{ u_0 \}\subset S^2$, for some $u_0 \in S^2$.

The last part of the proof is based on the observation that for two convex bodies of constant width it is enough to consider only a circle of directions $E(u_0)$. This is due to the fact that we can translate the bodies so that their diameters parallel to $u_0$ coincide.

Without loss of generality, we assume now that $\varphi^{-1}(0) = E(e_3)$. Consider two support planes $P_1$ and $P_2$ of $K_2$, which are parallel to $e_3^{\perp}$. Since $K_2$ has constant width, the points $x_1 = K_2 \cap P_1$ and $x_2 = K_2 \cap P_2$ belong to the common perpendicular to these planes (see \cite{Ga}, Lemma 7.1.13, p. 275), which implies that $K_1$ has a parallel translate $K'_1$  tangent to the planes $P_1$ and $P_2$ at the points $x_1$ and $x_2$ respectively. The projections of $K'_1$ and $K_2$ in the directions of the vectors from $e_3^{\perp}$ coincide, and since $\varphi^{-1}(0)$ intersects all the great circles on $S^2$, we obtain that $K'_1 = K_2$. Observe that any shift of any projection would change the values of the support functions on $e_3^{\perp}$ (otherwise the shift would be in the direction orthogonal to $e_3^{\perp}$, which is impossible since the points $x_1$ and $x_2$ are fixed). Similarly, we obtain that $-K_1' = K_2$ in the case $\varphi^{-1}(\pi)=E(e_3) $.

It is known (see \cite{S}, p. 38, Theorem 1.7.1) that if $h_{V_1}, h_{V_2}$ are the support functions of convex subsets $V_1, V_2 \subset \mathbb{E}^n$ respectively and $h_{V_1}(u) =  h_{V_2}(u)$ for any $u \in S^{n-1}$, then $V_1 = V_2$. Thus, we conclude that $\tilde{f}(u) + b \cdot u = \tilde{g}(u)$ in the case $K_1' = K_1 + b$; or $\tilde{f}(-u) + b \cdot u = \tilde{g}(u)$ in the case $-K_1' = -K_1 + b$. Subtracting the constant $C$ from both sides of both equations we conclude that $f(u) + b \cdot u = g(u)$ or $f(-u) + b \cdot u = g(u)$. This finishes the proof of Theorem \ref{th} in the case $n=3$.

\section{Proof of the main result in the case $n >3$}

Theorem \ref{th} in the case $n >3$ is a consequence of Theorem \ref{th} for $n=3$ and Theorem \ref{th3}.

\subsection{Proof of Theorem \ref{th3}}

By induction, it is enough to consider the case $k=n-1, n \geq 3$.

Consider two subsets of $S^{n-1}$,
$$\Xi_0 = \{ \xi \in S^{n-1}: f(v) + a_{\xi} \cdot v = g(v) \quad \forall v \in \xi^{\perp} \quad \textrm{and some}  \quad a_{\xi} \in \xi^{\perp} \} ,
$$
$$
\Xi_{\pi} = \{ \xi \in S^{n-1}: f(-v) + c_{\xi} \cdot v = g(v) \quad \forall v \in \xi^{\perp} \quad \textrm{ and some} \quad c_{\xi} \in \xi^{\perp} \}.
$$
\begin{lemma}\label{cl}
The sets $\Xi_0$ and $\Xi_{\pi}$ are closed.
\end{lemma}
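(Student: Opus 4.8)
The plan is to prove closedness sequentially: I will take a sequence $\{\xi_m\}_{m \in \mathbb{N}} \subset \Xi_0$ with $\xi_m \to \xi_0 \in S^{n-1}$ and show that $\xi_0 \in \Xi_0$. By definition, for each $m$ there is a vector $a_{\xi_m} \in \xi_m^{\perp}$ with $f(v) + a_{\xi_m} \cdot v = g(v)$ for every $v \in \xi_m^{\perp} \cap S^{n-1}$. The whole difficulty is to produce a limiting vector $a_{\xi_0} \in \xi_0^{\perp}$ satisfying the analogous identity on $\xi_0^{\perp}$.

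First I would establish a uniform bound on the vectors $a_{\xi_m}$. Set $F = g - f$, which is continuous, hence bounded, on the compact sphere $S^{n-1}$; let $M = \max_{S^{n-1}} |F|$. For a unit vector $v \in \xi_m^{\perp}$ the defining identity gives $a_{\xi_m} \cdot v = F(v)$, and since $a_{\xi_m} \in \xi_m^{\perp}$ we have $|a_{\xi_m}| = \max_{v \in \xi_m^{\perp} \cap S^{n-1}} a_{\xi_m} \cdot v = \max_{v \in \xi_m^{\perp} \cap S^{n-1}} F(v) \le M$. Thus the sequence $\{a_{\xi_m}\}$ is bounded, and by passing to a subsequence I may assume $a_{\xi_m} \to a_0$ for some $a_0 \in \mathbb{E}^n$. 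Passing to the limit in $a_{\xi_m} \cdot \xi_m = 0$ and using $\xi_m \to \xi_0$ yields $a_0 \cdot \xi_0 = 0$, i.e. $a_0 \in \xi_0^{\perp}$.

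It remains to verify $f(v) + a_0 \cdot v = g(v)$ for every $v \in \xi_0^{\perp} \cap S^{n-1}$. Here I approximate: given such $v$, its orthogonal projection onto $\xi_m^{\perp}$ is $v - (v \cdot \xi_m)\xi_m$, and since $v \cdot \xi_m \to v \cdot \xi_0 = 0$ this projection tends to $v$ and is nonzero for large $m$; normalizing gives unit vectors $v_m \in \xi_m^{\perp} \cap S^{n-1}$ with $v_m \to v$. Applying the identity for $\xi_m$ to $v_m$ gives $f(v_m) + a_{\xi_m} \cdot v_m = g(v_m)$, and letting $m \to \infty$, the continuity of $f$ and $g$ together with $a_{\xi_m} \to a_0$ and $v_m \to v$ yields $f(v) + a_0 \cdot v = g(v)$. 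Hence $\xi_0 \in \Xi_0$, and $\Xi_0$ is closed. The argument for $\Xi_{\pi}$ is identical once $f$ is replaced by the (still continuous) function $v \mapsto f(-v)$.

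The main obstacle I anticipate is the uniform boundedness of the $a_{\xi_m}$: without it there is no way to extract a limiting functional, and it is exactly here that continuity (hence boundedness) of $g - f$ is used. The subsequent approximation of $v \in \xi_0^{\perp}$ by vectors in the nearby subspaces $\xi_m^{\perp}$ is routine linear algebra, and no smoothness of $f, g$ is needed anywhere --- only continuity, consistent with the hypotheses of Theorem \ref{th3}.
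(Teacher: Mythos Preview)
Your proof is correct and follows essentially the same sequential argument as the paper's own proof, which also approximates $v\in\xi_0^{\perp}$ by $v_m\in\xi_m^{\perp}$ and passes to a subsequential limit of the translation vectors. The only difference is one of detail: where the paper simply writes ``by compactness, we may assume that $a_{\xi_n}\to b_\xi$,'' you explicitly justify the needed uniform bound $|a_{\xi_m}|\le \max_{S^{n-1}}|g-f|$, which is exactly the missing ingredient behind that compactness claim.
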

\begin{proof}
Following the argument from \cite{R2} (p.3433, Lemma 5) we may show that the sets $\Xi_0$ and $\Xi_{\pi}$ are closed (for the convenience of the reader we briefly repeat the proof).

Consider a convergent sequence $\{\xi_n\}_{n \in \mathbb{N}} \subset \Xi_0$, $\xi_n \to \xi$. For any $v \in \xi^{\perp}$ we can find a sequence  $\{v_n\}_{n \in \mathbb{N}}, v_n \in \xi_n^{\perp}$ such that $v_n \to v$ as $ n \to \infty$. For these sequences we have $f(v_n) + a_{\xi_n} \cdot v_n = g(v_n)$ and, by compactness, we may assume that $a_{\xi_n} \to b_{\xi} \in \xi^{\perp}$. Then $a_{\xi}\cdot v= g(v) - f(v) = b_{\xi} \cdot v$ for any $v \in \xi^{\perp}$, which implies $a_{\xi} = b_{\xi}$, $\xi \in \Xi_0$. A similar argument can be repeated for $\Xi_{\pi}$ to conclude that both sets $\Xi_0$ and $\Xi_{\pi}$ are closed.
\end{proof}

We will also use
\begin{lemma}[\cite{R2}, p.3431, Lemma 1]\label{rubik}
Let $n \geq 3$, let $f$ and $g$ be two continuous functions on $S^{n-1}$ and let
$$
\Lambda_0 = \{\xi \in S^{n-1}: f(v) = g(v) \quad \forall v \in \xi^{\perp}\},
$$
$$
\Lambda_{\pi} = \{\xi \in S^{n-1}: f(-v) = g(v) \quad \forall v \in \xi^{\perp}\}.
$$
If $S^{n-1} = \Lambda_0 \cup \Lambda_{\pi}$, then $S^{n-1} = \Lambda_0$ or $S^{n-1} = \Lambda_{\pi}$.
\end{lemma}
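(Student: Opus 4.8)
The plan is to reformulate the hypothesis in terms of two auxiliary functions and then run a short dichotomy argument whose only real content is a dimension count. Set $F(v)=g(v)-f(v)$ and $G(v)=g(v)-f(-v)$ for $v\in S^{n-1}$. With this notation a direction $\xi$ lies in $\Lambda_0$ exactly when $F$ vanishes identically on the equator $\xi^{\perp}\cap S^{n-1}$, and $\xi\in\Lambda_{\pi}$ exactly when $G$ vanishes identically on $\xi^{\perp}\cap S^{n-1}$; the hypothesis $S^{n-1}=\Lambda_0\cup\Lambda_{\pi}$ then says that on every equator at least one of $F,G$ vanishes. I want to prove that in fact $F\equiv 0$ on all of $S^{n-1}$ (which is the case $S^{n-1}=\Lambda_0$, i.e. $g(v)=f(v)$ everywhere) or $G\equiv 0$ on all of $S^{n-1}$ (which is $S^{n-1}=\Lambda_{\pi}$, i.e. $g(v)=f(-v)$ everywhere).

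First I would dispose of the trivial alternative: if $F\equiv 0$ we are already in the case $S^{n-1}=\Lambda_0$ and there is nothing to prove. So I assume $F\not\equiv 0$ and fix a point $u_0\in S^{n-1}$ with $F(u_0)\neq 0$; the goal becomes to show $G\equiv 0$. The key step is a rigidity coming from this single non-vanishing value: for every $\xi\in S^{n-1}$ with $\xi\cdot u_0=0$ we have $u_0\in\xi^{\perp}$, so $F$ cannot vanish identically on $\xi^{\perp}\cap S^{n-1}$, whence $\xi\notin\Lambda_0$. Since $\xi\in\Lambda_0\cup\Lambda_{\pi}$ by hypothesis, this forces $\xi\in\Lambda_{\pi}$, that is, $G$ vanishes on the entire equator $\xi^{\perp}\cap S^{n-1}$. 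In other words, $G$ vanishes on every equator whose pole is orthogonal to $u_0$.

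It then remains to upgrade this to $G\equiv 0$, and this is exactly where the hypothesis $n\geq 3$ enters. Given an arbitrary $v\in S^{n-1}$, I would produce a single direction $\xi$ orthogonal to both $u_0$ and $v$: if $v=\pm u_0$ any $\xi\perp u_0$ works, and otherwise $\mathrm{span}\{u_0,v\}$ is two-dimensional, so its orthogonal complement has dimension $n-2\geq 1$ and contains a unit vector $\xi$. Such a $\xi$ satisfies $\xi\perp u_0$, hence $G$ vanishes on $\xi^{\perp}\cap S^{n-1}$ by the previous step, and $v\in\xi^{\perp}$, so $G(v)=0$. As $v$ was arbitrary, $G\equiv 0$ on $S^{n-1}$, giving $g(v)=f(-v)$ for all $v$ and thus $S^{n-1}=\Lambda_{\pi}$, which completes the dichotomy.

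The only genuine obstacle is the dimension count in the last step: the existence, for any two prescribed directions, of a common orthogonal unit vector is precisely what separates $n\geq 3$ from $n=2$ (where the equators degenerate to antipodal pairs and the statement fails), so that is the point at which the argument would collapse if one tried to lower the dimension. It is worth noting that closedness of $\Lambda_0,\Lambda_{\pi}$ (Lemma \ref{cl}), and indeed continuity of $f,g$, are not actually needed for this route; an alternative would be to first split off the even parts and deduce $f_e=g_e$ as in Lemma \ref{even} via the Funk transform, reducing to the same rigidity statement for the odd parts, but the direct argument above appears shortest.
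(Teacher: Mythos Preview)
Your argument is correct. Note, however, that the paper does not supply its own proof of this lemma: it is quoted from \cite{R2} and used as a black box in the reduction carried out in Lemma~\ref{last}, so there is no in-paper argument to compare against. Your route is self-contained and, as you correctly observe, does not actually invoke continuity of $f$ and $g$; the only substantive ingredient is the dimension count $n\geq 3$ ensuring that any two unit vectors admit a common orthogonal direction, which is precisely the mechanism by which a single non-vanishing value of $F$ propagates to force $G\equiv 0$.
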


\begin{lemma}\label{last}
Let $f$ and $g$ be two continuous real-valued functions on $S^{n-1}$, such that $S^{n-1} = \Xi_0 \cup \Xi_{\pi}$, $n \geq 3$. Then $S^{n-1} = \Xi_0$ or $S^{n-1} = \Xi_{\pi}$.
\end{lemma}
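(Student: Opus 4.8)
The plan is to reduce the statement to its translation-free analogue, Lemma~\ref{rubik}, by showing that all of the translation vectors can be removed simultaneously after one global shift of $f$.

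First I would normalize the functions. Comparing the even parts of the two defining relations (the terms $a_\xi\cdot v$ and $c_\xi\cdot v$ are odd, and $f_e(-v)=f_e(v)$), one gets $f_e=g_e$ on $\xi^\perp$ for every $\xi\in\Xi_0\cup\Xi_\pi=S^{n-1}$. Since each $u\in S^{n-1}$ lies in the equator $\xi^\perp$ for any $\xi\perp u$, this forces $f_e=g_e$ on all of $S^{n-1}$, exactly as in Lemma~\ref{even}. Replacing $f$ and $g$ by their odd parts then leaves $\Xi_0$ and $\Xi_\pi$ unchanged (the even parts already agree), so I may assume $f,g$ are odd. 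Now for $\xi\in\Xi_0$ we have $g-f=a_\xi\cdot v$ on $\xi^\perp$, and for $\xi\in\Xi_\pi$ we have $g+f=c_\xi\cdot v$ on $\xi^\perp$; that is, $\Xi_0$ and $\Xi_\pi$ are exactly the sets of directions along whose equator $F:=g-f$, respectively $G:=g+f$, restricts to a linear function.

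By Lemma~\ref{cl} both sets are closed, and by hypothesis they cover the connected space $S^{n-1}$ with $n\geq 3$. If $\Xi_0\cap\Xi_\pi=\emptyset$, then a connected space cannot be split into two disjoint nonempty closed sets, so one of them is empty and the other equals $S^{n-1}$, and we are done. Hence I may assume $\Xi_0\cap\Xi_\pi\neq\emptyset$; on this overlap both $F$ and $G$ are linear on $\xi^\perp$, and so is $f=(G-F)/2$. The goal is to produce a single vector $y\in\mathbb{E}^n$ with $a_\xi=y|_{\xi^\perp}$ for every $\xi\in\Xi_0$ and $c_\xi=-y|_{\xi^\perp}$ for every $\xi\in\Xi_\pi$. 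Granting this, set $f_1(u)=f(u)+y\cdot u$; by the shift computation (\ref{shift}) the relation for $\xi\in\Xi_0$ becomes $g(v)=f_1(v)+(a_\xi-y|_{\xi^\perp})\cdot v=f_1(v)$, and the one for $\xi\in\Xi_\pi$ becomes $g(v)=f_1(-v)$ on $\xi^\perp$ (recall $f_1$ is odd). Thus $\Xi_0=\Lambda_0$ and $\Xi_\pi=\Lambda_\pi$ for the pair $f_1,g$, so $S^{n-1}=\Lambda_0\cup\Lambda_\pi$, and Lemma~\ref{rubik} yields $S^{n-1}=\Lambda_0=\Xi_0$ or $S^{n-1}=\Lambda_\pi=\Xi_\pi$, as required.

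The construction of the global vector $y$ is the heart of the matter and the step I expect to be the main obstacle. The functionals are compatible on intersections: for $\xi,\xi'\in\Xi_0$ one has $a_\xi\cdot v=F(v)=a_{\xi'}\cdot v$ on $\xi^\perp\cap\xi'^{\perp}$, whence $a_\xi-a_{\xi'}\in\mathrm{span}\{\xi,\xi'\}$ by Lemma~\ref{shiza}, and on a non-coplanar triple the defect vanishes by Lemma~\ref{suss2}. I would fix a non-coplanar triple inside a connected open piece of $\Xi_0$, define $y$ there using Lemmas~\ref{shiza} and~\ref{suss2}, and then propagate: once $F$ agrees with $v\mapsto y\cdot v$ on an open cone of equators, any $\xi\in\Xi_0$ whose equator meets that cone has $a_\xi$ vanishing on a relatively open subset of $\xi^\perp\cap S^{n-1}$, which forces $a_\xi=y|_{\xi^\perp}$ on the whole equator; the symmetric propagation gives $c_\xi=-y|_{\xi^\perp}$ on $\Xi_\pi$. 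The delicate point is the \emph{global} consistency of this choice: one must show that $y$ is single-valued as one moves around $S^{n-1}$ and crosses the common boundary $\Xi_0\cap\Xi_\pi$, where both propagations are active. This is precisely where the connectedness of $S^{n-1}$ for $n\geq 3$ and the interplay between the two families enter, following the method of \cite{R2} that underlies Lemma~\ref{rubik}.
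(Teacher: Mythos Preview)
Your overall strategy---pass to odd parts, produce global shift vectors using Lemmas~\ref{shiza} and~\ref{suss2}, then invoke Lemma~\ref{rubik}---is exactly the paper's. But the specific goal you set, a \emph{single} vector $y$ with $a_\xi=y|_{\xi^\perp}$ on $\Xi_0$ and $c_\xi=-y|_{\xi^\perp}$ on $\Xi_\pi$, is not attainable in general, and this is a genuine gap. After the reduction to odd functions, the relation on $\Xi_0$ reads $g_0-f_0=b\cdot v$ (globally on the $\Xi_0$-equators) and on $\Xi_\pi$ it reads $g_0+f_0=c\cdot v$; your claim amounts to $b=-c$, i.e.\ $g_0\equiv 0$ on every equator over $\Xi_0\cap\Xi_\pi$, which nothing in the hypotheses forces. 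The paper avoids this by shifting \emph{both} functions: it finds $b$ on $\Xi_0$ and $c$ on $\Xi_\pi$ independently, then sets $\tilde f_0=f_0+\tfrac{b-c}{2}\cdot v$ and $\tilde g_0=g_0-\tfrac{b+c}{2}\cdot v$, after which $\tilde f_0=\tilde g_0$ on $\Xi_0$-equators and $\tilde f_0(-v)=\tilde g_0(v)$ on $\Xi_\pi$-equators, so Lemma~\ref{rubik} applies directly.

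Your ``delicate point'' about propagating $y$ across $\partial\Xi_0\cap\partial\Xi_\pi$ is also misplaced: the paper needs no propagation at all. Once $\operatorname{int}(\Xi_0)\neq\emptyset$ (which follows since otherwise the closed set $\Xi_\pi$ is dense, hence all of $S^{n-1}$), pick any non-coplanar triple $u_1,u_2,w\in\Xi_0$; Lemma~\ref{shiza} gives $b$ from $u_1,u_2$, and then Lemma~\ref{suss2} applied to $u_1,u_2$ and an \emph{arbitrary} $w\in\Xi_0\setminus S_{u_1,u_2}$ shows in one step that $a_w=b|_{w^\perp}$. Continuity handles the nowhere-dense exceptional circle $S_{u_1,u_2}$. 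The same one-shot argument, run inside $\Xi_\pi$, produces $c$. The intersection $\Xi_0\cap\Xi_\pi$ is used only at the very end, to read off the correct pair of shifts of $f_0$ and $g_0$.
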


We will reduce this lemma to Lemma \ref{rubik}.

\begin{proof}

Since $\Xi_0 \cup \Xi_{\pi} = S^{n-1}$ and since the scalar product $v \to a_{\xi} \cdot v$ is an odd function on $\xi^{\perp}$, we have $f_e \equiv g_e$.

We can assume that $  int (\Xi_0) \neq \emptyset$ and $  int (\Xi_{\pi}) \neq \emptyset$. Indeed, if $  int (\Xi_0) = \emptyset$, then for any $x \in \Xi_0$ there exists a sequence $\{x_n\}_{n \in \mathbb{N}} \subset \Xi_{\pi}$, such that $x_n \to x$. Since $\Xi_{\pi}$ is closed, we obtain $x \in \Xi_{\pi}$ and hence $\Xi_0 \subset \Xi_{\pi}$.

The above implies that there exist two non-parallel vectors $u_1, u_2 \in int (\Xi_0)$. There also exists $w \in int (\Xi_0)$, such that $w$ is non-coplanar with $u_1, u_2$ (otherwise $int (\Xi_0) \subset S_{u_1,u_2}$, where $S_{u_1,u_2}$ is a great circle on $S^{n-1}$ which is spanned by $u_1$ and $u_2$, but that would imply $int (\Xi_0) = \emptyset$). A similar argument can be used to show that there exist three non-coplanar vectors in $int(\Xi_{\pi})$.

By Lemma \ref{shiza}, we may consider a vector $b \in \mathbb{E}^n$, such that the function $F(u) = f_0(u) + u \cdot b$ satisfies $F(v) = g_0(v)$ for any $v \in u_1^{\perp} \cup u_2^{\perp}$. By Lemma \ref{suss2}, this implies that $F(v) = g_0(v)$ for any $ v \in w^{\perp}$ , where $w \in \Xi_0  \backslash S_{u_1,u_2}$. Since $F$ is continuous on $S^{n-1}$, we have $F(v) = f_0(v)+b \cdot v =  g_0(v)$ for any $v \in w^{\perp}$, where $w \in \Xi_0$ and $b$ is independent of $w$. This is due to the fact that $S_{u_1,u_2}$ is nowhere dense in $S^{n-1}$. Similarly, we can show that there exists a vector $c \in \mathbb{E}^n$, such that $f_0(-v) + c \cdot v = g_0(v)$ for any $v \in w^{\perp}$, where $w \in \Xi_{\pi}$ and $c$ is independent of $w$.

The intersection $\Xi_0 \cap \Xi_{\pi} \neq \emptyset$, since $S^{n-1}$ is connected. Consider any $\xi \in \Xi_0 \cap \Xi_{\pi}$ and any $v \in \xi^{\perp}$. Then, $g_0(v) = f_0(v) + b \cdot v = f_0(-v) + c \cdot v$ or
$$
f_0(v) = \frac{c - b}{2} \cdot v, \quad g_0(v) = \frac{c + b}{2} \cdot v \quad \forall v \in \xi^{\perp}, \quad \forall \xi \in \Xi_0 \cap \Xi_{\pi}.
$$

Let $\tilde{f}_0(v) = f_0(v) + y\cdot v$ defined on $S^{n-1}$ for any $y \in \mathbb{E}^n$ . Observe that the set $\Xi_0$ for $f_0$ coincides with the set $\Xi_0$ for the function $\tilde{f}_0$. This is due to the fact that for any $\xi \in \Xi_0$ and $b_{\xi} = a_{\xi} - y|_{\xi^{\perp}} \in \xi^{\perp}$ we have
\begin{equation}\label{shift2}
\tilde{f}_0(v) + b_{\xi} \cdot v = f_0(v) + y \cdot v + ( a_{\xi} - (y|_{\xi^{\perp}}) ) \cdot v= g_0(v) \quad \forall v \in \xi^{\perp}.
\end{equation}
A similar observation holds for  $\xi \in \Xi_{\pi}$ if we put $b_{\xi} = c_{\xi}+y|_{\xi^{\perp}}$. And also, the same holds true for $g_0$, as both functions are interchangable. Hence, by taking $\tilde{f}_0(v) = f_0(v) + \frac{b-c}{2} \cdot v$ and $\tilde{g}_0(v) = g_0(v) - \frac{b+c}{2} \cdot v$, we have $\tilde{f}_0(v) = \tilde{g}_0(v) = 0$ for $\forall v \in \xi^{\perp}, \forall \xi \in \Xi_0 \cap \Xi_{\pi}$. Also, $\tilde{f}_0(v) = \tilde{g}_0(v)$ for any $v \in w^{\perp}, w \in \Xi_0$ and $\tilde{f}_0(-v) = \tilde{g}_0(v)$ for any $v \in w^{\perp}, w \in \Xi_{\pi}$.

Applying Lemma \ref{rubik} to the functions $\tilde{f}_0$ and $\tilde{g}_0$ and the sets $\Lambda_0 = \Xi_0$ and $\Lambda_{\pi} = \Xi_{\pi}$ respectively, we finish the proof of Lemma \ref{last}.
\end{proof}

The proof of Theorem \ref{th3} now follows from the above lemma by induction on $k$.

\subsection{Proof of Theorem \ref{th} for $n >3$}

The proof of Theorem \ref{th} in this case is a direct consequence of Theorem \ref{th3} and the proof of Theorem \ref{th} in the case $n=3$.

\section{Proof of Theorem \ref{th2} and \ref{hedg}}

 Theorem \ref{hedg} and Theorem \ref{th2} (under the additional hypothesis that the support functions of $K$ and $L$ are twice differentiable) are the direct consequences of Theorem \ref{th}.

Let $H \subset \mathbb{E}^n$ be a classical hedgehog with support function $h = h_{H} $ defined on $S^{n-1}$. Let $\alpha_k$ denote a $k$-dimensional plane passing through the origin and $\alpha_k^{\perp}$ be its orthogonal complement in $\mathbb{E}^n$. Then if $H|_{\alpha_k}$ is the projection of $H$ on $\alpha_k$ we have
$$
h_{H|_{\alpha_k}}(u) = h_{H}(u) \quad \textrm{for any} \quad u \in \alpha_k \cap S^{n-1}.
$$

This is due to the fact that for any $x \in \mathbb{E}^n$ and $u \in \alpha_k\cap S^{n-1}$ we have $h_{H}(u) = \max\{ x \cdot u : x \in H\} = \max \{(x|_{\alpha_k} + x|_{\alpha_k^{\perp}}) \cdot u : x \in H\} = \max \{x|_{\alpha_k} \cdot u: x \in H\} =h_{H|_{\alpha_k}}(u)$.

Since the requirement on the convexity can be weakened, the following two properties (see \cite{Ga}, p. 18) of support functions of convex bodies hold true for classical hedgehogs in $\mathbb{E}^n$.
\begin{enumerate}
\item For any $\phi \in O(n)$ we have $h_{\phi(H)}(u) = \max \{ x \cdot u: x \in \phi(H)\} = \max \{ \phi(z) \cdot u : z \in H \} = \max \{z \cdot \phi^T(u): z \in H \} = h_H(\phi^T(u))$;

\item For any $ a \in \mathbb{E}^n$ we have $h_{H + a}(u) = \max \{ x \cdot u : x \in H + a \} = \max \{ x \cdot u : x \in H \} + a \cdot u = h_{H}(u) + a \cdot u$.
\end{enumerate}

To conclude the proof of Theorems \ref{th2} and \ref{hedg}, we observe that the conditions on projections in the theorem can be re-written as
$$
h_M( \psi_{\xi}^T(u) ) + a_{\xi} \cdot u = h_N(u) \quad \textrm{for any} \quad u \in E(\xi),
$$
where $\psi_{\xi}$ is a rotation on $\xi^{\perp}$ and $a_{\xi} \in \xi^{\perp}$; $M$ and $ N$ are a pair of hedgehogs in the case of Theorem \ref{hedg} (or a pair of two convex bodies in the case of Theorem \ref{th2}) with the support functions $h_M$ and $h_N$ respectively. By taking $f = h_M, g = h_N $ and $\phi_{\xi} = \psi_{\xi}^T$, we conclude that $h_N(u) = h_M(u) + b \cdot u$ or $h_N(u) = h_M(-u) + b \cdot u$ for some $b \in \mathbb{E}^n$. In the first case $N = M + b$, and in the second, $N = -M+ b$.

\end{document}